\newcommand{\FF}{{\mathbb{F}}}
\newcommand{\bG}{{\mathbf{G}}}
\newcommand{\OO}{{\operatorname{O}}}
\newcommand{\SL}{{\operatorname{SL}}}
\newcommand{\PSL}{{\operatorname{PSL}}}
\newcommand{\POmega}{{\operatorname{P\Omega}}}
\newcommand{\Syl}{{\operatorname{Syl}}}
\newcommand{\alt}{\mathfrak{A}}
\newcommand{\sym}{\mathfrak{S}}
\newcommand{\tw}[1]{{}^#1\!}
\def\syl#1#2{{\rm Syl}_#1(#2)}
\def\oh#1#2{{\bf O}_{#1}(#2)}
\def\zent#1{{\bf Z}(#1)}
\def\irr#1{{\rm Irr}(#1)}
\def\norm#1#2{{\bf N}_{#1}(#2)}
\def\cent#1#2{{\bf C}_{#1}(#2)}
\def\irrp#1#2{{\rm Irr}_{#1'}(#2)}
\def\sbs{\subseteq}
\newtheorem{thm}{Theorem}[section]
\newtheorem{lem}[thm]{Lemma}
\newtheorem{conj}[thm]{Conjecture}
\newtheorem{prop}[thm]{Proposition}
\newtheorem*{thm*}{Theorem $\heartsuit$}
\newtheorem*{conjA}{Conjecture A}
\newtheorem*{conjB}{Conjecture B}
\newtheorem*{conjC}{Conjecture C}
\newtheorem*{thmD}{Theorem D}
\newtheorem*{thmE}{Theorem E}
\theoremstyle{remark}
\newcommand\wt[1]{\widetilde{#1}}
\newcolumntype{?}{!{\vrule width 1pt}}
\newcommand{\GL}{\operatorname{GL}}
\newcommand{\PSp}{\operatorname{PSp}}
\newcommand{\Sp}{\operatorname{Sp}}
\newcommand{\SO}{\operatorname{SO}}
\newcommand{\GO}{\operatorname{GO}}
\newcommand{\Aut}{\operatorname{Aut}}
\newcommand{\aut}{\operatorname{Aut}}
\newcommand\type[1]{\operatorname{#1}}
\begin{document}

\title[Principal Blocks for different Primes, II]{Principal Blocks for different Primes, II}

\author{Gabriel Navarro}
\address[G. Navarro]{Departament de Matem\`atiques, Universitat de Val\`encia, 46100 Burjassot,
       Val\`encia, Spain.}
\email{gabriel@uv.es}

\author{Noelia Rizo}
\address[N. Rizo]{Departamento de Matem\'aticas, Universidad de Oviedo, 33007, Oviedo, Spain}
\email{rizonoelia@uniovi.es}

\author{A. A. Schaeffer Fry}
\address[A. A. Schaeffer Fry]{Department of  Mathematics and Statistics, Metropolitan State University of Denver, Denver, CO 80217, USA}
\email{aschaef6@msudenver.edu}

\thanks{The first and second authors are partially supported by Grant PID2019-103854GB-I00 funded by MCIN/AEI/10.13039/501100011033. The second author also acknowledges support by Generalitat Valenciana AICO/2020/298 and Grant PID2020-118193GA-I00 funded by MCIN/AEI/10.13039/501100011033.  The third author is partially supported by a grant from the National Science Foundation, Award No. DMS-2100912.}

\keywords{}

\subjclass[2010]{20C20, 20C15}

\begin{abstract}
If $G$   is a finite group, we have proposed new conjectures on the interaction
between different primes and their corresponding Brauer principal blocks.
In this paper, we give strong support to the validity of these conjectures.   \end{abstract}

\medskip

\maketitle

\centerline{\sl To Pham Huu Tiep, on his 60th birthday}
\bigskip

\section{Introduction}
Meaningful interaction between the representation theory of finite groups from the perspective of different primes
is extremely rare. However, in \cite{NRS1}, we proposed the following three plausible conjectures, which 
extended work of several authors (see \cite{LWXZ}, \cite{NW}, \cite{BNOT}, \cite{MN20}).

\medskip
If $p$ is  a prime and   $G$ is a finite group, we denote by $B_p(G)$  the principal $p$-block of $G$.
The main subject of our work is the set $\irrp{p}{B_p(G)}$ of the irreducible
complex characters in the principal $p$-block of $G$ whose degree is not divisible by $p$.
This set seems to possess remarkable properties.
\medskip
 
\begin{conjA}
Let $G$ be a finite group and let $p$ and $q$ be different primes.
If $$\irrp{p}{B_p(G)} \cap  \irrp{q}{B_q(G)}=\{1_G\},$$
then there are a Sylow $p$-subgroup $P$ of $G$
and a Sylow $q$-subgroup $Q$ of $G$ such that $xy=yx$ for all
$x \in P$ and $y \in Q$.
\end{conjA}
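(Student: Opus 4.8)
The plan is to prove the contrapositive by a minimal-counterexample argument and to reduce, via the Classification of Finite Simple Groups, to a statement about almost simple groups. Call $\chi\in\irr G$ a \emph{good character} for the pair $(p,q)$ if $\chi\neq 1_G$, the degree $\chi(1)$ is coprime to $pq$, and $\chi\in B_p(G)\cap B_q(G)$; thus the hypothesis of Conjecture~A says exactly that $G$ has no good character, while the conclusion is that some Sylow $p$-subgroup of $G$ centralises some Sylow $q$-subgroup of $G$. Let $G$ be a counterexample of minimal order and set $\pi=\{p,q\}$. First I would show $\oh{\pi'}G=1$: if $1\neq N=\oh{\pi'}G$, then $N$ is a $p'$- and a $q'$-group, so by block domination the inflations of $\irr{B_p(G/N)}$ and $\irr{B_q(G/N)}$ are precisely the characters of $B_p(G)$, respectively $B_q(G)$, that are trivial on $N$; hence a good character of $G/N$ inflates to a good character of $G$, so $G/N$ has no good character. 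Moreover, if $\bar P\times\bar Q\le G/N$ with $\bar P\in\syl p{G/N}$ and $\bar Q\in\syl q{G/N}$, then the full preimage $H$ of $\bar P\bar Q$ has $N$ as a normal $\pi'$-Hall subgroup and $H/N$ nilpotent, so by Schur--Zassenhaus a $\pi$-Hall complement of $N$ in $H$ splits as $P\times Q$ with $P\in\syl pG$ and $Q\in\syl qG$; thus $G/N$ has no commuting Sylow $p$- and $q$-subgroups either, making $G/N$ a smaller counterexample --- a contradiction. Hence $\oh{\pi'}G=1$, and $F^{*}(G)=F(G)E(G)$ with $F(G)$ a $\pi$-group and $\cent G{F^{*}(G)}\le F^{*}(G)$.

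The next and most delicate stage is to reduce to the almost simple case. Combining Clifford theory for the components and for $\oh pG$, $\oh qG$ with the behaviour of principal blocks under normal subgroups --- and using a direct-product reduction (a good character of a simple direct factor, suitably inflated, is again good, and commuting Sylow subgroups of a direct product are detected on the factors) --- one should arrive at the case where $G$ is almost simple with socle a non-abelian simple group $S$ and $p,q\mid|G|$. Two points demand care here. Ruling out $\oh pG\neq 1$ is not a mere passage to $G/\oh pG$, since that quotient need not inherit the ``no commuting Sylows'' property; it needs a genuine argument using how the characters of $B_p(G)$ and $B_q(G)$ lie over $\oh pG$ and $\oh qG$. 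And, because the commuting-Sylow hypothesis has to be carried through Clifford theory, what one really needs about $S$ is an inductive-type statement --- in the spirit of the conditions used for the McKay and Alperin--McKay conjectures --- relating good characters of the universal cover of $S$ to their behaviour under $\Aut(S)$, which one must formulate precisely and then verify for every $S$; this continues the program begun in \cite{NRS1}.

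For the simple groups I would argue by type. For $S=\alt_n$ with $n\ge 5$: by the Nakayama conjecture the $\ell$-block of $\sym_n$ containing $1$ consists of the partitions with the same $\ell$-core as $(n)$, and by classical results the $\ell'$-degree irreducible characters are determined combinatorially, so it suffices to produce a partition $\lambda\neq(n),(1^n)$ whose $p$-core and $q$-core equal those of $(n)$ and whose degree is coprime to $pq$ (taken non-self-conjugate for $\Aut(\alt_n)$-equivariance); this is possible for all but small $n$, the exceptional $n$ and $\alt_6$ being done by direct computation, while for the overgroup $\sym_n$ the sign character frequently serves. For $S$ sporadic or the Tits group the statement is read off the known character tables and block distributions. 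The main case is $S$ of Lie type in defining characteristic $r$: if $r\notin\{p,q\}$ the natural candidate is the Steinberg character $\St_S$, of degree $|S|_r$ (hence coprime to $pq$) and $\Aut(S)$-invariant, which by the known classification of unipotent $\ell$-blocks lies in $B_\ell(S)$ for $\ell\in\{p,q\}$ except in certain ``generic'' configurations of $\ell$ relative to $S$, where it is replaced by a unipotent character whose $d_p$- and $d_q$-cuspidal supports match those of the trivial character; if $r\in\{p,q\}$ one instead uses a character of small degree --- a Weil character for classical $S$ --- where block-membership is controlled via the transparent block structure in defining characteristic.

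The hard part is precisely this last case: one must control, \emph{simultaneously} and with a \emph{single} $\Aut(S)$-equivariant character, membership in both the principal $p$-block and the principal $q$-block of $S$, uniformly over all Lie types, all ranks (including the small exceptional ranks), all isogeny types, and all pairs of distinct primes --- including the defining-characteristic prime --- together with its extendibility to the relevant almost simple overgroups. The $d_p$- and $d_q$-combinatorics of unipotent blocks interact opaquely, and the Steinberg character genuinely fails in some configurations, so no single uniform choice works. This is presumably why the paper establishes strong support rather than a complete proof, with unconditional results for $\pi$-separable $G$, for $G$ whose non-abelian composition factors lie in restricted families, and for small or fixed primes, alongside the reduction of the general case to the inductive statement for simple groups.
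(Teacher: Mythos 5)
There is a fundamental problem here: the statement you are ``proving'' is Conjecture~A, which the paper explicitly presents as an open conjecture, not a theorem. The paper offers no proof of it; it records that the predecessor paper \cite{NRS1} reduced Conjecture~A to a question about almost simple groups and settled the case where one of the two primes is $2$, and it states plainly that the almost simple case for two odd primes ``remains quite a challenge.'' The present paper then deliberately turns to Conjectures~B and~C instead. So there is no proof in the paper against which your argument could be judged correct, and your proposal does not supply one either.

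Your text is a strategy outline rather than a proof, and you acknowledge this yourself in the final paragraph. The first reduction step ($\oh{\pi'}G=1$ via block domination and Schur--Zassenhaus) is sound, but everything after that is conditional: the passage to the almost simple case is asserted (``one should arrive at\dots''), the inductive-type condition on simple groups that would make the Clifford-theoretic descent work is never formulated, and the decisive case --- simple groups of Lie type with $p,q$ both odd and nondefining, where $d_p(r)=d_q(r)$ and the Steinberg character can fail to lie in the relevant principal blocks --- is exactly the case you concede is not handled. That case is the entire content of the conjecture beyond what \cite{NRS1} already established. In short, the gap is not a missing lemma in an otherwise complete argument; it is the open problem itself. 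If you want to contribute to this circle of ideas within the present paper, the productive targets are Theorem~D (Conjecture~C implies Conjecture~B) and Theorem~E (Conjecture~C for simple groups), which the paper does prove and whose simple-group analysis (semisimple characters $\chi_t$ with $t$ of $p$-power order, Lusztig series constraints on $\irr{B_q(G)}$, hook-length computations for $\sym_n$) is the kind of machinery your sketch would eventually need.
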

\medskip
\begin{conjB}
Let $G$ be a finite group and let $p$ and $q$ be primes dividing the order of $G$.
If $\irrp{p}{B_p(G)}=\irrp{q}{B_q(G)}$, then $p=q$.
\end{conjB}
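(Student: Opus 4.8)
\medskip

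\noindent\emph{Proof strategy.}
The plan is to argue by contradiction, taking a counterexample $G$ of least order: so $p\neq q$, both $p$ and $q$ divide $|G|$, and $S:=\irrp{p}{B_p(G)}=\irrp{q}{B_q(G)}$. Then $1_G\in S$, and $S\neq\{1_G\}$ because the principal $p$-block of a group whose order is divisible by $p$ always contains a nontrivial character of $p'$-degree; moreover every $\chi\in S$ has degree coprime to $pq$ and lies in $B_p(G)\cap B_q(G)$. The first stage is a reduction to (almost) simple groups, run along the same lines as the treatment of Conjecture~A. Inflation from $G/N$ respects principal blocks and degrees, so $\irrp{p}{B_p(G/N)}\subseteq\irrp{p}{B_p(G)}$ and likewise for $q$; combining this with Clifford theory of blocks --- block covering, the Fong--Reynolds correspondence, and the behaviour of the principal block with respect to $\oh{p}{G}$, $\oh{q}{G}$, $\oh{p'}{G}$, $\oh{q'}{G}$ --- one tries to transport the equality $\irrp{p}{B_p(\cdot)}=\irrp{q}{B_q(\cdot)}$ down to a proper section of $G$, or up to a proper subgroup, in either case contradicting minimality. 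The aim is to be left with $G$ quasisimple or almost simple, ideally simple.

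In that reduced situation $G$ has no nontrivial abelian quotient, so there are no useful linear characters and the comparison must be made with characters of degree $>1$. The goal becomes, for each pair $p\neq q$ dividing $|G|$, to exhibit an explicit $\chi\in\Irr(G)$ lying in $B_p(G)$ with $p\nmid\chi(1)$ for which either $q\mid\chi(1)$ or $\chi\notin B_q(G)$; any such $\chi$ separates $\irrp{p}{B_p(G)}$ from $\irrp{q}{B_q(G)}$ and finishes the proof. For alternating groups this should come out of the combinatorics of $p$-cores and $p$-quotients of partitions (the Nakayama conjecture), which simultaneously singles out the $p'$-degree constituents of the principal block and computes their degrees. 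For the sporadic groups one reads it off the known character tables, block distributions and decomposition matrices. For the groups of Lie type in defining characteristic $r$ one turns to Deligne--Lusztig theory: the Steinberg character has $r$-defect zero and so is barred from $\irrp{r}{B_r(G)}$, and more generally the unipotent characters, the $d$-cuspidal pairs and the $e$-Harish-Chandra series --- controlled via Jordan decomposition and the block-theoretic results of Brou\'e--Malle--Michel, Bonnaf\'e--Rouquier and Cabanes--Enguehard --- supply enough prime-dependent block behaviour to manufacture the desired separating character.

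The main obstacle is twofold. First, the reduction in the first stage is the genuine heart of the matter: cross-prime statements of this kind do not reduce cleanly, since a fixed normal subgroup $N$ interacts with $B_p(G)$ and with $B_q(G)$ through unrelated mechanisms --- for example $N$ may be a $p'$-group and hence transparent to the principal $p$-block yet opaque to the principal $q$-block when $q\mid|N|$ --- so one cannot run a single Clifford-theoretic induction on both primes at once and is forced to treat separately the various configurations of $\oh{p}{G}$, $\oh{q}{G}$, $\oh{p'}{G}$, $\oh{q'}{G}$, $\zent{G}$ and the components of $G$. Second, inside the base case it is the groups of Lie type that carry the weight: one needs a description of $\irrp{p}{B_p(G)}$ uniform enough to compare across all primes and across the infinitely many families, handling both the defining-characteristic regime (unipotent blocks and the Steinberg character) and the non-defining-characteristic regime ($e$-Harish-Chandra theory and $d$-cuspidality). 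This is where most of the technical work lies, and plausibly where one is compelled, at least for the time being, to settle for strong partial results rather than the conjecture in full.
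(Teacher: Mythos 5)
There is a genuine gap, and it starts at the very top: the paper does not prove Conjecture B unconditionally. What it proves (Theorem D) is that Conjecture C implies Conjecture B, and the single decisive observation is that the hypothesis $\irrp{p}{B_p(G)}=\irrp{q}{B_q(G)}$ already implies the hypothesis of Conjecture C (every member of the common set has degree prime to both $p$ and $q$), so that Conjecture C hands you commuting Sylow subgroups: $[P,Q]=1$ for some $P\in\syl pG$, $Q\in\syl qG$. That commuting-Sylow conclusion is the engine of the entire induction. It is what kills the abelian minimal normal subgroups (via the Alperin--Dade restriction bijection between principal blocks of $G$ and of $M=\cent GN$, combined with It\^o--Michler-type consequences of $\irrp{p}{G/M}=\{1\}$), what forces every minimal normal subgroup to be semisimple with $pq$ dividing the order of each simple factor $S$ and with $[P\cap S,Q\cap S]=1$, and what makes the base-case statement for simple groups provable at all: the paper's Theorem 2.1 produces, \emph{under the hypothesis} $[P,Q]=1$, a character $\alpha\in\irrp{p}{B_p(S)}\setminus\irr{B_q(S)}$ that is moreover invariant under a Sylow $p$-subgroup of $\Aut(S)$. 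Your proposal never extracts any structural information from the hypothesis, so your proposed reduction has nothing to run on; as you yourself note, the cross-prime Clifford theory ``does not reduce cleanly,'' and indeed the authors state they cannot even reduce Conjecture C to almost simple groups. Attempting Conjecture B directly, without routing through the commuting-Sylow condition, is not a viable plan as written.

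Two further concrete omissions. First, the separating character you want for a simple group $S$ is not enough by itself: to derive a contradiction in $G$ one must pass from a character of one factor $S$ to a character of the minimal normal subgroup $N$ (a product of $G$-conjugates of $S$) that is $P$-invariant, extend it to $PN$, and then apply Murai's theorem to find a $p'$-degree character of $B_p(G)$ above it. This is why the $\Aut(S)_p$-invariance in Theorem 2.1 and the gluing Lemma 2.2 are indispensable; your sketch omits the equivariance requirement entirely. Second, even granting commuting Sylows, the simple-group statement is not read off from generic block theory: for Lie type it uses semisimple characters $\chi_t$ with $t$ of order $p$ in the centre of a Sylow $p$-subgroup of the dual group, the fact that $B_q(G)$ only meets Lusztig series $\mathcal{E}(G,s)$ with $|s|$ a $q$-power, and a counting argument to get $\Aut(S)_p$-invariance. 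If you want to salvage your write-up, restate the goal as ``Conjecture C implies Conjecture B,'' put the deduction of $[P,Q]=1$ front and centre, and build the induction around it.
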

\medskip
\begin{conjC}
Let $G$ be a finite group, and let $p$ and $q$ be different primes.
Then $q$ does not divide $\chi(1)$ for all $\chi \in \irrp{p}{B_p(G)}$
and $p$ does not divide $\chi(1)$ for all $\chi \in \irrp{q}{B_q(G)}$
if and only if there are a Sylow $p$-subgroup $P$ of $G$
and a Sylow $q$-subgroup $Q$ of $G$ such that $xy=yx$ for all
$x \in P$ and $y \in Q$.
\end{conjC}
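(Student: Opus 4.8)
The plan is to attack the two implications of the statement separately, because the ``if'' direction (commuting Sylow subgroups force the degree condition) is substantially more tractable than the converse, which is where the real difficulty lies. For the ``if'' direction I would induct on $|G|$ and treat the two halves of the conclusion independently. For the half concerning $B_p(G)$, the key observation is that $\oh{p'}{G}$ lies in the kernel of every character in $\irr{B_p(G)}$, and a Sylow $q$-subgroup of $G$ meets $\oh{p'}{G}$ in a Sylow $q$-subgroup of it; hence inflation yields a degree-preserving identification of $\irrp{p}{B_p(G)}$ with $\irrp{p}{B_p(G/\oh{p'}{G})}$ while the elementwise-commuting condition descends to $G/\oh{p'}{G}$. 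Thus we may assume $\oh{p'}{G}=1$, and symmetrically $\oh{q'}{G}=1$ when dealing with the half concerning $B_q(G)$.

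Under the reduction $\oh{p'}{G}=1$ the argument splits. If $G$ is $p$-solvable then $\cent{G}{\oh{p}{G}}\le\oh{p}{G}$, so any Sylow $q$-subgroup centralizing a Sylow $p$-subgroup lies in $\oh{p}{G}$ and is therefore trivial, and the relevant half of the conclusion is automatic. If $G$ is not $p$-solvable then $\bg F^\ast(G)=\oh{p}{G}\times E(G)$ with $E(G)\ne 1$, and since $Q$ centralizes $\oh{p}{G}$ the problem is pushed onto the components of $G$: Clifford theory relates $\chi\in\irrp{p}{B_p(G)}$ to characters lying in principal blocks of the quasi-simple components, and one finishes using the known description of principal blocks of quasi-simple groups — together with the classification, which shows that two Sylow subgroups for distinct primes dividing the order essentially never commute elementwise inside a nonabelian simple group, making the residual case nearly vacuous.

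For the ``only if'' direction I would argue by minimal counterexample. (Note that one cannot simply invoke Conjecture A here: the degree hypothesis of Conjecture C forces $\irrp{p}{B_p(G)}$ and $\irrp{q}{B_q(G)}$ into characters of $\{p,q\}'$-degree, but this does not obviously force their intersection to be trivial — there can be nontrivial characters of $\{p,q\}'$-degree in $B_p(G)\cap B_q(G)$ — so a direct attack is required.) Using Clifford theory, once more together with the fact that $\oh{p'}{G}$ and $\oh{q'}{G}$ lie in kernels of principal-block characters, one should be able to force $\bg F^\ast(G)$ to be a direct product of copies of a single nonabelian simple group permuted transitively by $G$, and to reduce the whole problem to the following assertion about an almost simple group $X$ with simple socle $S$: if $S$ does \emph{not} have a Sylow $p$-subgroup and a Sylow $q$-subgroup commuting elementwise (the generic case), then either $B_p(X)$ has an irreducible character of $p'$-degree divisible by $q$, or $B_q(X)$ has one of $q'$-degree divisible by $p$. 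This is then established by a case analysis over the families of simple groups: for groups of Lie type in defining characteristic one exploits that the principal block is almost the whole group, together with unipotent and semisimple degree formulas and Zsigmondy primes; in cross characteristic one uses $d$-Harish-Chandra theory and the explicit distribution of characters into $p$-blocks and $q$-blocks of quasi-simple groups; and the alternating and sporadic groups are handled with known block data and the Atlas.

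The main obstacle is precisely this last step. What is needed is \emph{uniform} control, across all the infinite families of simple groups and all pairs of distinct primes, both of which $p'$-degree characters lie in the principal $p$-block and of the $q$-part of their degrees — and the generic Zsigmondy-prime arguments break down in a long list of small-rank and exceptional configurations that must each be dispatched by hand, where even deciding principal-block membership can be delicate. A second, more structural, difficulty peculiar to the ``only if'' direction is that its hypothesis is not visibly inherited by the almost simple sections of $G$, so the Clifford-theoretic reduction must be set up with care to guarantee that a minimal counterexample genuinely descends to an almost simple counterexample; this is where most of the bookkeeping will be concentrated.
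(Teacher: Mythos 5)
The statement you are proving is Conjecture C, which is an \emph{open conjecture}: the paper does not prove it, and indeed the authors explicitly state that they do not even know how to reduce it to a question about almost simple groups. What the paper establishes is only partial evidence: Theorem E (the conjecture holds for finite simple groups, via Propositions \ref{prop:Cif}--\ref{prop:Cclassical}), Theorem \ref{psolv} (the conjecture holds for $p$-solvable groups \emph{conditionally} on Conjecture \ref{overMcKay}, an Alperin--McKay-type statement), and Theorem D (Conjecture C implies Conjecture B). Your submission is likewise not a proof but a research program, and its two hardest steps --- the Clifford-theoretic descent of the degree hypothesis to an almost simple counterexample, and the uniform case analysis over all families of simple groups and all pairs of primes --- are precisely the steps that remain open. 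You correctly identify them as the main obstacles, but identifying an obstacle is not the same as overcoming it, so the ``only if'' direction is entirely unproved in your write-up. (Your parenthetical remark that Conjecture A cannot simply be invoked is correct and worth keeping.)

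Two further concrete points. First, the ``if'' direction is already a theorem of Malle--Navarro \cite{MN20} (the paper uses it as such in the proofs of Proposition \ref{prop:Cif} and Theorem \ref{psolv}), so the honest route is to cite it rather than re-derive it; your sketch of that direction contains a false assertion, namely that commuting Sylow $p$- and $q$-subgroups ``essentially never'' occur in a nonabelian simple group whose order both primes divide. This happens for infinitely many simple groups --- for instance in $\PSL_2(29)$ the Sylow $3$- and Sylow $5$-subgroups lie in a common cyclic torus of order $15$ --- and the entire content of Theorem \ref{thsimples} and of \cite[Theorem 3.6]{MN20} is to handle exactly these non-vacuous cases, so your residual case cannot be dismissed. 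Second, for the reduction in the ``only if'' direction you would need, at minimum, control of how $\irrp{p}{B_p(G)}$ restricts to and induces from normal subgroups with the degree hypothesis intact (the paper's Section 2 shows how delicate this already is for the weaker Theorem D, requiring Alperin--Dade block isomorphisms, Murai's height-zero results, and equivariance of the simple-group characters under $\Aut(S)_p$); none of this machinery appears in your outline. In short: the approach is reasonable as a plan and consistent with how the authors attack the partial cases, but there is no proof here, and no proof exists in the paper either.
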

\medskip
The main result of \cite{NRS1} was to reduce Conjecture A to a problem on almost simple groups and to prove it in the case that one of the primes is $2$.  To prove Conjecture A for almost simple groups in the case 
where $p$ and $q$ are both odd
remains quite a challenge.  

\medskip

In the present paper, we focus on Conjectures B and C.
Using the Classification of Finite Simple Groups, in
our first main theorem we prove the following.

\begin{thmD}
Conjecture C implies Conjecture B.
\end{thmD}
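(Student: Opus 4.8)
The plan is to argue by contradiction. Suppose Conjecture C holds, and let $G$ be a counterexample to Conjecture B of smallest possible order, so that there are primes $p\neq q$ dividing $|G|$ with
\[
S:=\irrp{p}{B_p(G)}=\irrp{q}{B_q(G)}.
\]
The first observation is that $S$ automatically satisfies the hypothesis of Conjecture C: if $\chi\in S$ then $p\nmid\chi(1)$ because $\chi\in\irrp{p}{B_p(G)}$, and $q\nmid\chi(1)$ because $\chi\in\irrp{q}{B_q(G)}$. Since we are assuming Conjecture C, there exist $P\in\Syl_p(G)$ and $Q\in\Syl_q(G)$ with $[P,Q]=1$. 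All the remaining work is to show that this, together with the minimality of $|G|$, is impossible.

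Next I would record some structural restrictions on $G$. A degree-$1$ character $\lambda$ of $G$ lies in $B_p(G)$ exactly when it is trivial on all $p$-regular elements, i.e.\ when $\mathbf{O}^{p}(G)\le\ker\lambda$; hence the linear characters in $B_p(G)$ are precisely $\Irr(G/G'\mathbf{O}^{p}(G))$, whose number is a power of $p$. As the analogous number computed from $\irrp{q}{B_q(G)}$ is a power of $q$ and $p\neq q$, both equal $1$, so $p,q\nmid|G:G'|$. Next, by Brauer's Third Main Theorem --- or more directly because the principal block of $G$ covers only the principal block of any normal subgroup --- every character in $B_p(G)$ has $\mathbf{O}_{p'}(G)$ in its kernel, so $\irrp{p}{B_p(G)}=\irrp{p}{B_p(G/\mathbf{O}_{p'}(G))}$, and likewise for $q$. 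Since quotienting by a normal $\{p,q\}'$-subgroup is harmless for both principal blocks, passing to $G/\mathbf{O}_{\{p,q\}'}(G)$ preserves a counterexample, so minimality forces $\mathbf{O}_{\{p,q\}'}(G)=1$. Continuing in this vein --- analyzing a minimal normal subgroup of $G$, invoking Conjecture C again on the sections where it is needed --- one should be able to reduce to the case where $G$ is almost simple, or more precisely to a statement about a finite non-abelian simple group $S$ and its extensions inside $\Aut(S)$, with $p,q$ dividing $|S|$, with $p,q\nmid|G:G'|$, and with commuting Sylow $p$- and $q$-subgroups.

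It remains to rule out this almost simple configuration, and here the Classification enters. First I would use $[P,Q]=1$ to cut down $S$: a Sylow $p$-subgroup of $S$ commuting with a Sylow $q$-subgroup means $\mathbf{C}_S(Q)$ contains a full Sylow $p$-subgroup of $S$, and the known structure of centralizers in simple groups confines $S$ to a short list (for $S$ alternating or sporadic, including the Tits group, this is a direct inspection; for $S$ of Lie type it essentially forces both $p$ and $q$ to divide the order of a single maximal torus and pins down the admissible ranks). Then, for each surviving $S$, I would compute $\irrp{p}{B_p(S)}$ and $\irrp{q}{B_q(S)}$ --- via combinatorics for $\alt_n$, via the known character theory and block theory of finite reductive groups for the Lie type case (in many cases the relevant blocks have cyclic or small defect, where Brauer trees make the comparison explicit), and via the character tables for the sporadic groups --- and check that the two sets differ, which is the desired contradiction. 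When convenient one may instead compute $|\irrp{p}{B_p(\mathbf{N}_S(P))}|$ using the blockwise Alperin--McKay equality for principal blocks. Finally, the cases in which $p$ or $q$ divides only $|\Out(S)|$ are handled separately, using $p,q\nmid|G:G'|$ and the smallness of outer automorphism groups.

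The main obstacle is this last, CFSG-dependent step, and it is precisely what makes Conjecture B harder than the ``commuting Sylows'' half of Conjecture A: the mere existence of commuting Sylow subgroups is \emph{not} contradictory --- for instance $\PSL_2(16)$ has a Sylow $3$-subgroup commuting with a Sylow $5$-subgroup --- so one must understand $\irrp{p}{B_p(S)}$ well enough (its cardinality, or the multiset of degrees, or its image under the principal-block McKay correspondence) to separate it from $\irrp{q}{B_q(S)}$ for every surviving $S$ and every pair of primes, and doing this uniformly across the groups of Lie type, in both defining and non-defining characteristic, is where the real work lies. A secondary, more technical obstacle is making the reductions of the second paragraph airtight: inflation from $G/N$ is compatible with principal blocks only when $N$ is a $p'$-group, so the normal $p$- and $q$-subgroups of $G$ require separate care, and one must track exactly where Conjecture C is invoked a second time to carry the induction through.
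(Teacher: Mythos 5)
Your opening move is exactly the paper's: apply Conjecture C to the hypothesis $\irrp{p}{B_p(G)}=\irrp{q}{B_q(G)}$ of Conjecture B (every character in that set has degree prime to both $p$ and $q$) to obtain commuting Sylow subgroups, and then run an induction on $|G|$. From there, however, the proposal has two genuine gaps. First, the reduction is not to an almost simple group: the paper works with a minimal normal subgroup $N$ that is a direct product of $G$-conjugates of a simple group $S$, and the contradiction must be transported from $S$ up to $G$. This forces the simple-group statement to be much stronger than ``$\irrp{p}{B_p(S)}\neq\irrp{q}{B_q(S)}$.'' What is actually needed (the paper's Theorem \ref{thsimples}) is a character $\alpha\in\irrp{p}{B_p(S)}$ that lies outside the \emph{entire} principal $q$-block of $S$ and is invariant under a Sylow $p$-subgroup of $\Aut(S)$. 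The invariance is what allows one to assemble a $P$-invariant character $\tau$ of $N$ (Lemma \ref{perm}), extend it to $PN$, and apply Murai's lemma to produce $\chi\in\irrp{p}{B_p(G)}$ over $\tau$; the ``outside $B_q(S)$'' condition is what turns $\chi\in\irrp{q}{B_q(G)}$ into a contradiction, since block covering forces $\tau\in\irr{B_q(N)}$. Merely knowing the two height-zero sets of $S$ differ gives you no way to contradict the hypothesis, which is a statement about $G$.

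Second, your plan to ``cut down $S$ to a short list'' via centralizer structure and then inspect Brauer trees or character tables does not reflect the actual situation: the simple groups of Lie type in non-defining characteristic with commuting (necessarily abelian) Sylow $p$- and $q$-subgroups form infinite families, and the paper handles them uniformly via Lusztig series --- constructing a semisimple character $\chi_t$ with $t$ of order $p$ in the center of a Sylow $p$-subgroup of the dual group, using that $\irr{B_q(G)}$ meets only series $\mathcal{E}(G,s)$ with $|s|$ a $q$-power, and averaging over the $\Aut(S)_p$-action to get the required invariance. There are also several intermediate steps you wave at (``continuing in this vein'') that are not routine: eliminating abelian minimal normal subgroups requires the Alperin--Dade isomorphism of principal blocks and a second, separate invocation of the structure of $\irrp{p}{G/M}$, and your observation that $p,q\nmid|G:G'|$, while correct, is not what drives the induction. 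As written, the proposal is a reasonable strategy outline but omits the two pieces that constitute essentially all of the proof's content.
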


After proving Theorem D, therefore, we concentrate our efforts in the remainder of the paper towards  Conjecture C.

\begin{thmE}
Conjecture C holds for finite simple groups. 
\end{thmE}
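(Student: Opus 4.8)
The plan is to establish the equivalence in Conjecture~C for each nonabelian simple group $S$ and each pair of distinct primes $p,q$, beginning by disposing of the cases in which $pq\nmid|S|$. If $q\nmid|S|$ then the principal $q$-block of $S$ is $\{1_S\}$ (over a large enough complete discrete valuation ring every $q$-block of $S$ is a singleton when $q\nmid|S|$), so $\irrp{q}{B_q(S)}=\{1_S\}$; together with the symmetric remark when $p\nmid|S|$, this makes the left-hand side of Conjecture~C hold. On the other hand a Sylow $q$-subgroup of $S$ is then trivial, so the right-hand side holds as well. Hence from now on we may assume that both $p$ and $q$ divide $|S|$.

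Under this hypothesis I claim that \emph{both} sides of the equivalence fail, so that it holds vacuously. For the right-hand side: a nonabelian simple group whose order is divisible by $pq$ possesses no commuting pair consisting of a Sylow $p$-subgroup and a Sylow $q$-subgroup; equivalently, for $P\in\Syl_p(S)$ the centralizer $\cent{S}{P}$ contains no Sylow $q$-subgroup of $S$. This is purely group-theoretic and follows from an inspection of the centralizer structure of the finite simple groups (compare the arguments in \cite{NRS1}). It then remains to show that the left-hand side fails, namely that
$$
\exists\,\chi\in\irrp{p}{B_p(S)}\text{ with }q\mid\chi(1)\qquad\text{or}\qquad\exists\,\chi\in\irrp{q}{B_q(S)}\text{ with }p\mid\chi(1);
$$
in particular we are free to interchange $p$ and $q$.

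This last statement is verified using the Classification. For $S=\alt_n$ one passes to $\sym_n$, where the principal $p$-block consists of the irreducible characters $\chi^\lambda$ whose $p$-core is a prescribed partition; via the hook-length formula and Nakayama's combinatorics one produces a $\lambda$ with that $p$-core for which $f^\lambda$ is divisible by $q$ but not by $p$, and checks that the resulting character of $\alt_n$ (irreducible, or a constituent of degree $f^\lambda/2$) remains in the principal block and has $p'$-degree. For $S$ sporadic, or for the small groups of Lie type with exceptional behaviour, this is a finite check against the known character tables and block distributions. The main case is that of the groups of Lie type $S=\bG^F$ over $\FF_{q_0}$, of defining characteristic $\ell_0$. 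If $p=\ell_0$, then every character of $\ell_0'$-degree — in particular every semisimple character — lies in the principal $p$-block, and it suffices to choose a semisimple character $\chi_s$ whose degree (the $\ell_0'$-part of the index of $\cent{\bG^*}{s}^F$) is divisible by $q$, which is possible by picking the semisimple element $s$ of the dual group suitably. If $p\ne\ell_0$, one invokes the description of the principal $p$-block through $e$-Harish-Chandra theory (Broué--Michel, Cabanes--Enguehard): it is the principal $\Phi_e$-block with $e=\operatorname{ord}_p(q_0)$, and one exhibits a character in it of $p'$-degree divisible by $q$ — among the unipotent characters when that block contains enough of them (using that a unipotent character lies there precisely when $\Phi_e$ fails to divide its generic degree, and reading off divisibility by $q$, including the case $q=\ell_0$, from the cyclotomic and $q_0$-power factors of that degree) and otherwise among the characters of a Lusztig series $\EC(\bG^F,s)$ for a suitable semisimple $p'$-element $s$ of full rank in the dual.

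The main obstacle is this Lie-type analysis: it has to be carried out uniformly over all families and ranks, twisted and untwisted, simultaneously forcing divisibility of the chosen character degree by $q$ and non-divisibility by $p$, and dealing with the low-rank coincidences, the exceptional Schur multipliers, and the passage from the simply connected group to its simple quotient. The alternating-group step is easier but still requires arranging the two divisibility conditions on $f^\lambda$ at once, and the group-theoretic input about non-commuting Sylow subgroups, although elementary, must itself be checked across all the simple groups.
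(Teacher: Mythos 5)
There is a genuine gap at the heart of your argument: the claim that a nonabelian simple group $S$ with $pq\mid |S|$ never has a Sylow $p$-subgroup commuting with a Sylow $q$-subgroup is false, so the equivalence in Conjecture~C is not vacuous and the ``if'' direction cannot be dismissed. For a concrete counterexample, take $S=\PSL_2(29)$ with $p=3$ and $q=5$: both primes divide $|S|=2^2\cdot 3\cdot 5\cdot 7\cdot 29$, and the Sylow $3$- and Sylow $5$-subgroups both lie inside a cyclic maximal torus of order $(29+1)/2=15$, hence commute elementwise. More generally, for groups of Lie type in cross characteristic the Sylow $p$- and $q$-subgroups can be abelian and contained in a common abelian subgroup whenever $d_p(r)=d_q(r)$ and the relevant multiplicities are small; the classification of exactly when $[P,Q]=1$ happens for simple groups is the content of \cite[Propositions 3.2--3.5]{MN20}, and the paper's own Theorem~\ref{thsimples} is stated precisely under the hypothesis that $S$ is nonabelian simple, $pq\mid |S|$, and $[P,Q]=1$.

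Consequently the ``if'' direction requires real work: one must show that $[P,Q]=1$ forces every character in $\irrp{p}{B_p(S)}$ to have $q'$-degree and vice versa. The paper does this (Proposition~\ref{prop:Cif}) by observing that $[P,Q]=1$ forces $P$ and $Q$ to be abelian, so that $\irrp{p}{B_p(S)}=\irr{B_p(S)}$ by the principal-block case of Brauer's height zero conjecture \cite{MN21}, and then invoking \cite[Theorem 3.6]{MN20}. Your ``only if'' direction (exhibiting, up to swapping $p$ and $q$, a $p'$-degree character in $B_p(S)$ of degree divisible by $q$) is in the right spirit and matches the paper's Propositions~\ref{prop:Caltsym}--\ref{prop:Cclassical} in outline, but as you acknowledge it is only a program; and note that it must be carried out exactly on the complement of the set of cases where $[P,Q]=1$ can occur, a set you have incorrectly assumed to be empty.
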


Besides Conjecture C being true for simple groups,
in Theorem \ref{psolv} below, we shall also prove that Conjecture C is true for
$p$-solvable groups, assuming the inductive Alperin--McKay condition.
This gives strong support to the validity of
 this conjecture. 
 \medskip
 
 Unfortunately, at the time of this writing, we still do not know how to reduce Conjecture C to a question on almost simple groups.
\section{Theorem D}\label{sectionreduction}
   
 In this Section we prove that Conjecture C implies Conjecture B. 
This will require the following result on simple groups.

 \begin{thm}\label{thsimples} Let $p,q$ be different primes and let $S$ be a non-abelian simple group with $pq\mid |S|$. Assume that $[P,Q]=1$ for some
 Sylow $p$-subgroup $P$ of $S$ and a Sylow $q$-subgroup $Q$ of $S$.
 Then one of the following holds:
 \begin{enumerate}[(a)]
 \item There exists $\alpha\in{\rm Irr}_{p'}(B_p(S))-  {\rm Irr}(B_q(S))$  which
 is ${\rm Aut}(S)_p$-invariant, where ${\rm Aut}(S)_p$ is some Sylow $p$-subgroup of ${\rm Aut}(S)$.

 \item There exists $\alpha\in{\rm Irr}_{q'}(B_q(S))- {\rm Irr}(B_p(S))$ which
 is  ${\rm Aut}(S)_q$-invariant, where ${\rm Aut}(S)_q$ is some Sylow $q$-subgroup of ${\rm Aut}(S)$.
 \end{enumerate}
  \end{thm}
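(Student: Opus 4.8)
The plan is to extract strong structural information from the commutativity hypothesis, reduce to groups of Lie type, and then produce $\alpha$ as a suitable \emph{semisimple character}, using Lusztig series together with the Brou\'e--Michel description of blocks at non-defining primes. First I would show that $P$ and $Q$ are both abelian and that $p,q$ are odd. Since $Q\le\cent{S}{P}$, if $P$ were non-abelian then (using the Classification and known facts about Sylow subgroups of simple groups) $\cent{S}{P}$ would be a $p$-group, forcing $Q=1$; by symmetry $Q$ is abelian as well. If $p=2$, then $P$ abelian puts $S$ on Walter's list ($\PSL_2(q)$ with $q\equiv\pm3\pmod 8$ or $q$ a $2$-power, $\tw2G_2(3^{2n+1})$, or $J_1$), in each of which $\cent{S}{P}=P$ is a $2$-group, again a contradiction; so $p,q$ are odd. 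Alternating groups are excluded because commuting Sylow $p$- and $q$-subgroups of $\sym_n$ must have disjoint support, impossible once $pq\mid n!$, and the sporadic groups by inspection; so $S=\mathbf{G}^F/\zent{\mathbf{G}^F}$ with $\mathbf{G}$ simple simply connected, $F$ a Steinberg endomorphism over $\FF_q$ and $q$ a power of a prime $\ell\notin\{p,q\}$ (if $\ell\in\{p,q\}$ the relevant Sylow subgroup is the unipotent radical $U$ of an $F$-stable Borel and $\cent{S}{U}$ is an $\ell$-group). A closer analysis (using that $P$ and $Q$ are abelian and commute, and treating the bad primes of $\mathbf{G}$ separately) then shows $e:=e_p(q)=e_q(q)$ and that, after conjugating, $P,Q\le\mathbf{T}^F$ for a common $F$-stable maximal torus $\mathbf{T}=\cent{\mathbf{G}}{\mathbf{S}}$ with $\mathbf{S}$ a Sylow $\Phi_e$-torus, $|\mathbf{T}^F|_p=|\mathbf{G}^F|_p$ and $|\mathbf{T}^F|_q=|\mathbf{G}^F|_q$; conversely every such $S$ meets the hypotheses.

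Unipotent characters cannot help, since for a non-defining prime $r$ the $r$-block of a unipotent character depends only on $e_r(q)$, so with $e_p=e_q$ any unipotent character of $B_p(S)$ also lies in $B_q(S)$; the symmetry between $p$ and $q$ must be broken by a genuinely semisimple character. Passing to a dual group $\mathbf{G}^*$ with dual torus $\mathbf{T}^*$, note $\mathbf{T}^{*F}$ contains a full Sylow $p$-subgroup $P^*$ and a full Sylow $q$-subgroup of $\mathbf{G}^{*F}$. Choose $s\in P^*\cap[\mathbf{G}^{*F},\mathbf{G}^{*F}]$ of order $p$ and let $\alpha$ be the character of $S$ afforded by the semisimple character in $\EC(\mathbf{G}^F,s)$ (using a regular embedding if needed for disconnectedness of $\cent{\mathbf{G}^*}{s}$); by the choice of $s$ it is trivial on $\zent{\mathbf{G}^F}$, and $\alpha\ne 1_S$ as $s\ne 1$. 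Then $\alpha(1)=[\mathbf{G}^{*F}:\cent{\mathbf{G}^*}{s}^F]_{\ell'}$ is prime to both $p$ and $q$, because $\cent{\mathbf{G}^*}{s}\supseteq\mathbf{T}^*$; moreover $\alpha\in B_p(S)$, for $s$ is a $p$-element and $\cent{\mathbf{G}^*}{s}$ contains the Sylow $\Phi_e$-torus $\mathbf{S}$, so by the Cabanes--Enguehard description of unipotent blocks $\alpha$ lies in the unipotent block containing $1_S$, i.e., the principal one; and $\alpha\notin B_q(S)$, because $s$ has order $p$ coprime to $q$, so $\alpha\in\EC_q(\mathbf{G}^F,s)$, which by Brou\'e--Michel is disjoint from $\EC_q(\mathbf{G}^F,1)\supseteq\irr{B_q(S)}$. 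Hence $\alpha\in\irrp{p}{B_p(S)}\setminus\irr{B_q(S)}$.

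It remains to arrange that $\alpha$ is invariant under a Sylow $p$-subgroup $A$ of $\Aut(S)$. Here one uses the freedom in $s$: $A$ normalises the Sylow $p$-subgroup $A\cap\mathrm{Inndiag}(S)$ of the inner-diagonal group, hence normalises its centraliser in $\mathbf{G}_{\mathrm{ad}}$, which for our $p$ is a maximal torus carrying the full $p$-part of $|\mathbf{G}^F|$; transporting this torus to $\mathbf{T}^*$ we may assume $A$ normalises $P^*\cap[\mathbf{G}^{*F},\mathbf{G}^{*F}]$ and so acts $\FF_p$-linearly on the non-zero $\FF_p$-space $\Omega_1(P^*\cap[\mathbf{G}^{*F},\mathbf{G}^{*F}])$. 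A $p$-group fixes a non-zero vector of a non-zero $\FF_p$-space; choosing $s$ to be such a vector, and using that the semisimple character depends only on the $\mathbf{G}^{*F}$-conjugacy class of $s$, makes $\alpha$ invariant under $\Aut(S)_p$. This gives conclusion (a); the symmetric choice of $s$ of order $q$ gives (b), to be used as a fallback.

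The main obstacle is that the generic argument above needs repair in a finite list of configurations: at the bad primes of $\mathbf{G}$ (which, as $2$ is excluded, means $p$ or $q$ equal to $3$ in the exceptional types and in triality-type $D_4$, or equal to $5$ in $E_8$) the Sylow subgroup may fail to be torus-controlled and the principal-block statement needs the bad-prime part of the Cabanes--Enguehard theory; and for the groups where duality is implemented by a graph automorphism (types $A_n$, $D_n$, $E_6$) one must check directly that the action of $\Aut(S)$ is compatible with the passage to $\mathbf{G}^*$. Carrying out, type by type, the verification that each of these finitely many cases still provides an $\Aut(S)_p$-invariant element of $\irrp{p}{B_p(S)}\setminus\irr{B_q(S)}$ (or the $q$-analogue) is where the real work lies; one must also check that the block memberships descend correctly from $\mathbf{G}^F$ to $S$ when $p$ or $q$ divides $|\zent{\mathbf{G}^F}|$.
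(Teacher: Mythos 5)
Your proposal follows essentially the same route as the paper: reduce (via the known consequences of $[P,Q]=1$ from Malle--Navarro) to cross-characteristic groups of Lie type with abelian Sylow $p$- and $q$-subgroups, take the semisimple character of a $p$-element $s$ in the centre of a dual Sylow $p$-subgroup so that Brou\'e--Michel excludes it from $B_q(S)$ while it lies in $\irrp{p}{B_p(S)}$, and obtain $\Aut(S)_p$-invariance by a fixed-point count on the order-$p$ elements available for $s$. The only substantive looseness is your intermediate claim that a non-abelian Sylow $p$-subgroup of a simple group has $p$-group centraliser (the correct reduction is the cited one from \cite{MN20}, not this), and the paper sidesteps your ``finite list of repairs'' by quoting \cite{GSV19} and \cite{NRS1} for the block membership and by noting $p\nmid|\zent{G}|$, $p\ge 5$ to confine $\Aut(S)_p$ to inner and field automorphisms.
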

  \begin{proof}
  First, note that by \cite[Lemma 3.1]{MN20}, the condition $[P,Q]=1$ implies that $[\bar P, \bar Q]=1$ for some Sylow $p$- and $q$- subgroups $\bar P, \bar Q$ of any covering group of $S$. If $S$ is one of the sporadic groups $J_1$ or $J_4$, then we may use GAP and its Character Table Library to see that the statement holds. 
  Then \cite[Propositions 3.2-3.4]{MN20} further imply that we may assume $S$ to be a simple group of Lie type that is not isomorphic to a sporadic or alternating group and is defined in characteristic $r_0\not\in\{p,q\}$.
  
  So, let $S$ be of the form $S=G/\zent{G}$ for $G$ a group of Lie type of simply connected type defined in  characteristic $r_0\neq p,q$.  
  In this case, we may further assume that $p$ and $q$ are both odd and that the Sylow $p$- and $q$-subgroups of $G$  are abelian, using \cite[Proposition 3.5]{MN20}. 
 
 Since $p$ and $q$ are odd, note that we may therefore assume without loss that $p\geq 5$.  Further, using \cite[Lemma 2.1 and Proposition 2.2]{malle14}, we have $p\nmid |\zent{G}|$. Together, this implies that $p$ does not divide the order of any diagonal or graph outer automorphism, so that
 $\Aut(S)_p$ may be taken as a subgroup of $S\rtimes \langle F\rangle$, where $F$ is a generating field automorphism. 

Now, in \cite[Proposition 3.5]{NRS1}, building off of \cite[Theorem 3.5]{GSV19}, we have shown that there is a character $\chi\in\irrp{p}{B_p(S)}$ such that $\chi$ is the deflation of a so-called semisimple character $\chi_t$ of $G$ with $t\neq 1$ an element of the dual group $G^\ast$ of order a power of $p$. In particular, $\chi_t$ lies in the Lusztig series $\mathcal{E}(G, t)$. Since $B_q(G)$ contains only characters lying in Lusztig series $\mathcal{E}(G, s)$ with $|s|$ a power of $q$ (see \cite[Theorem 9.12]{CE04}), we see that $\chi$ does not  lie in $\irr{B_q(S)}$.

It now suffices to argue that $t$ can have been chosen so that $\chi_t$ is invariant under $\aut(S)_p$. Following the proof of \cite[Theorem 3.5]{GSV19}, we see that $t$ was chosen to lie in $\zent{P^\ast}$, where $P^\ast$ is a Sylow $p$-subgroup of the dual group $G^\ast$, and that we may further choose $t$ to have order $p$ without losing the properties we have already discussed.  

Now, since $p\nmid |\zent{G}|$, we have $\cent{\bG^\ast}{t}$ is connected, where $\bG^\ast$ is the ambient reductive group whose fixed points under an appropriate Frobenius endomorphism yields $G^\ast$ (see \cite[Exercise 20.16]{MT11}). Then for $\varphi\in\langle F\rangle$, we have $\chi_t^\varphi=\chi_{t^{\varphi^\ast}}$, where $\varphi^\ast$ is an appropriate field automorphism of $G^\ast$ (see \cite[Corollary 2.5]{NTT08}). Since $t^{\varphi^\ast}$ must be $G^\ast$-conjugate to another element of order $p$ in $\zent{P^\ast}$, we have $\chi_t^\varphi$ is also of the form $\chi_{t'}$ with $t'\in\zent{P^\ast}$ of order $p$, is still trivial on $\zent{G}$, and still lies in $B_p(S)$ but not $B_q(S)$ under deflation.  Hence $\aut(S)_p$ acts on such characters and we let $\alpha$ be a fixed point under this action, since  the number of such elements $t'$ is relatively prime to $p$. 
  \end{proof}
 
 \medskip

 \begin{lem}\label{perm}
 Suppose that $N$ is a minimal normal subgroup of $G$, which is a direct product of the different
 $G$-conjugates of a non-abelian simple group $S$. Let $P \in \syl pG$.
Suppose that  $\alpha\in{\rm Irr} (S)$   
 is ${\rm Aut}(S)_p$-invariant, where ${\rm Aut}(S)_p$ is some Sylow $p$-subgroup of ${\rm Aut}(S)$.
 Then there are $g_i \in G, h_i \in P$ and $\sigma_i \in {\rm Aut}(S^{g_i})$ such that 
 $(\alpha^{g_1})^{\sigma_1h_1} \times \cdots \times  (\alpha^{g_m})^{\sigma_m h_m} \in \irr N$ is $P$-invariant.
 \end{lem}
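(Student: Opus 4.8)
The plan is to build the required $P$-invariant character of $N$ one simple factor at a time, organizing the simple direct factors of $N$ according to the $P$-orbits into which they fall. Write $N=S_1\times\cdots\times S_n$, where $S_1,\dots,S_n$ are the $G$-conjugates of $S$, and let $\Omega=\{S_1,\dots,S_n\}$, on which $P$ acts by conjugation. Decompose $\Omega=\mathcal{O}_1\sqcup\cdots\sqcup\mathcal{O}_k$ into $P$-orbits; for each $j$ pick $T_j\in\mathcal{O}_j$, write $T_j=S^{g_j}$ with $g_j\in G$, set $P_j:=\{u\in P:T_j^{u}=T_j\}$ for the stabilizer of $T_j$ in $P$, and fix a right transversal $X_j$ of $P_j$ in $P$, so that $x\mapsto T_j^{x}$ is a bijection from $X_j$ onto $\mathcal{O}_j$.

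First I would choose, for each $j$, a $P_j$-invariant character $\alpha_j\in\irr{T_j}$ of the form $\alpha_j=(\alpha^{g_j})^{\sigma_j}$ with $\sigma_j\in\Aut(S^{g_j})$. Conjugation induces a homomorphism $\norm{G}{T_j}\to\Aut(T_j)$ with kernel $\cent{G}{T_j}$, and its restriction to $P_j$ has image a $p$-subgroup of $\Aut(T_j)$, hence contained in some Sylow $p$-subgroup $A_j$ of $\Aut(T_j)$. By hypothesis $\alpha$ is invariant under some Sylow $p$-subgroup of $\Aut(S)$, so, transporting through the isomorphism $S\to T_j$ given by conjugation by $g_j$, the character $\alpha^{g_j}\in\irr{T_j}$ is invariant under some Sylow $p$-subgroup of $\Aut(T_j)$. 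Since the Sylow $p$-subgroups of $\Aut(T_j)$ are all conjugate, there is $\sigma_j\in\Aut(T_j)=\Aut(S^{g_j})$ for which $\alpha_j:=(\alpha^{g_j})^{\sigma_j}$ is $A_j$-invariant; because $P_j$ acts on $T_j$ through its image in $A_j$, this $\alpha_j$ is then $P_j$-invariant.

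Next I would assemble the product. For each pair $(j,x)$ with $1\le j\le k$ and $x\in X_j$, place on the simple factor $T_j^{x}$ the character $\alpha_j^{x}=(\alpha^{g_j})^{\sigma_j x}\in\irr{T_j^{x}}$; as $(j,x)$ runs over all such pairs, $T_j^{x}$ runs over $S_1,\dots,S_n$ without repetition, so
\[
\Theta\ :=\ \prod_{j=1}^{k}\ \prod_{x\in X_j}\alpha_j^{x}\ \in\ \irr{N}
\]
is well defined, and after relabelling the pairs $(j,x)$ as $1,\dots,m$ — so that $m=n$ and, for the index $i$ attached to $(j,x)$, we have $g_i:=g_j$, $\sigma_i:=\sigma_j$, $h_i:=x$ — it is exactly of the form claimed in the statement. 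Finally I would verify that $\Theta$ is $P$-invariant. Given $h\in P$, conjugation by $h$ sends the factor $\alpha_j^{x}$ on $T_j^{x}$ to $\alpha_j^{xh}$ on $T_j^{xh}$; writing $xh=ux'$ with $u\in P_j$ and $x'\in X_j$, the $P_j$-invariance of $\alpha_j$ gives $\alpha_j^{xh}=(\alpha_j^{u})^{x'}=\alpha_j^{x'}$, and likewise $T_j^{xh}=T_j^{x'}$. Thus $h$ merely permutes the factors of $\Theta$ lying over each orbit $\mathcal{O}_j$ among themselves, and so $\Theta^{h}=\Theta$.

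The argument is largely bookkeeping with transversals; the one place where the hypothesis genuinely enters — and the only slightly delicate step — is the construction of $\alpha_j$, where one must observe that the conjugation action of the orbit stabilizer $P_j$ on $T_j$ is a $p$-subgroup of $\Aut(T_j)$, so that the assumed invariance of $\alpha$ under a \emph{full} Sylow $p$-subgroup of $\Aut(S)$ can be realigned, by a Sylow conjugation inside $\Aut(T_j)$, to a character invariant under all of $P_j$. Using the \emph{same} such character on every simple factor within a given $P$-orbit is then precisely what makes the product $\Theta$ come out $P$-invariant.
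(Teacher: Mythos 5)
Your proof is correct and follows essentially the same route as the paper's: decompose the simple factors into $P$-orbits, use the embedding of $\norm{P}{S_i}\cent{G}{S_i}/\cent{G}{S_i}$ into a Sylow $p$-subgroup of $\Aut(S_i)$ together with Sylow conjugacy to produce a $\norm{P}{S_i}$-invariant twist $\alpha_i^{\sigma_i}$, and then spread it over a transversal to get a $P$-invariant product. The only difference is cosmetic — you verify $P$-invariance via the coset identity $xh=ux'$ while the paper evaluates the product character on elements — so there is nothing to add.
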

  
 \begin{proof}
 Suppose that $N$ is the direct product of $\Omega=\{ S^g |\, g \in G\}$.
 Now, write
 $$\Omega=\mathcal O(S_1) \cup \ldots \cup \mathcal O(S_t), $$
 where $\mathcal O(S_i)$ is the $P$-orbit of some $S_i=S^{x_i}$, for some $x_i \in G$.
 Then $N=N_1 \times \cdots \times N_t$, where $N_i$ is the product of the elements in $\mathcal{O}(S_i)$.
 Of course, $N_i$ is $P$-invariant.
 Let $\alpha_i=\alpha^{x_i} \in \irr{S_i}$. Suppose that $\{S_i^{y_1}, \ldots S_i^{y_r}\}$ are 
 the different $P$-conjugates of $S_i$, where $y_i \in P$.  Hence
 $N_i=S_i^{y_1}\times  \ldots \times S_i^{y_r}$, and
 $$P=\bigcup_{j=1}^r \norm P{S_i} y_j$$
 is a disjoint union.
 
 By hypothesis, $\alpha$ is invariant under $X \in \syl p{{\rm Aut}(S)}$.
 Therefore $\alpha_i$ is invariant under $X_i=X^{x_i} \in \syl p{{\rm Aut}(S_i)}$.
 Let $M_i=\norm G{S_i}$ and $C_i=\cent G{S_i}$. We have that $M_i/C_i$ embeds into ${\rm Aut}(S_i)$.
 Then $\norm P{S_i}C_i/C_i$ is a $p$-subgroup of ${\rm Aut}(S_i)$, and therefore, there
 is $\sigma_i \in {\rm Aut}(S_i)$ such that $(\norm P{S_i}C_i/C_i)\sbs X_i^{\sigma_i} $.
 Since $\alpha_i$ is $X_i$-invariant, it follows that $\beta_i=(\alpha_i)^{\sigma_i}\in{\rm Irr}(S_i)$ is $X_i^{\sigma_i}$-invariant, and therefore
 $\norm P{S_i}$-invariant.  We claim that $\gamma_i=\beta_i^{y_1} \times \cdots \times \beta_i^{y_r}\in\irr{N_i}$ is $P$-invariant.
 Indeed, if $x \in P$, then $y_k x=w_k y_{\sigma(k)}$ for some $w_k \in \norm P{S_i}$, $1 \le k \le r$,
  and $\sigma$ a permutation of $\sf S_r$.
  Now, if $u \in S_i$, then 
  $$\gamma_i^{x^{-1}}(u^{y_k})=\gamma_i(u^{y_k x})=\alpha(1)^{r-1}\beta_i^{y_{\sigma(k)}}(u^{w_ky_{\sigma(k)}})=
  \alpha(1)^{r-1}\beta_i(u^{w_k})= \alpha(1)^{r-1}\beta_i(u)=\gamma_i(u^{y_k}) \, .$$
  This proves that $\gamma_i$ is $P$-invariant. Hence $\gamma=\gamma_1 \times \cdots \times \gamma_t \in \irr N$
  is $P$-invariant.
  \end{proof}
 

 We will need the following well-known result of   J. Alperin  and E. C. Dade. 
\begin{thm}\label{isomblocks}
Suppose that $N$ is a normal subgroup of $G$, with $G/N$ a $p'$-group.
Let $P \in \syl p G$ and assume that $G=N\cent GP$. Then restriction of characters defines
a natural bijection between the irreducible characters of the principals blocks
of $G$ and $N$. 
\end{thm}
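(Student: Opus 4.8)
\emph{Setting up.} Since $G/N$ is a $p'$-group and $P\in\syl pG$, we have $P\le N$; thus $P\in\syl pN$, and $P$ is a defect group of both $B_p(N)$ and $B_p(G)$. If $Q$ is a $p$-subgroup of $\cent GP$, then $PQ$ is a $p$-subgroup containing $P\in\syl pG$, so $PQ=P$ and $Q\le P\cap\cent GP=\zent P$; hence $\zent P=\oh p{\cent GP}$, and by Schur--Zassenhaus $\cent GP=\zent P\rtimes K$ with $K$ a $p'$-group centralising $P$. As $\zent P\le P\le N$, the hypothesis becomes $G=NK$, so $G/N\cong K/(K\cap N)$ is a $p'$-group. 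Finally $B_p(N)$ is $G$-invariant (it is the unique block of $N$ containing $1_N$) and $B_p(G)$ covers it (since $1_G$ restricts to $1_N$), so every $\chi\in\irr{B_p(G)}$ lies over characters of $N$ belonging to $B_p(N)$.

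\emph{The plan.} Work over a sufficiently large complete discrete valuation ring $\mathcal O$ and set $e=e_{B_p(N)}\in Z(\mathcal O N)$. Since $B_p(N)$ is $G$-invariant, $e$ is fixed by $G$-conjugation, so it commutes with $\mathcal O N$ and with every element of $G$; hence $e\in Z(\mathcal O G)$, and $e$ is a sum of block idempotents of $G$ with $e_{B_p(G)}$ among them. Using $G=NK$ and $e\in\mathcal O N$, the algebra $\mathcal O G e$ is a crossed product $B_p(N)\ast(G/N)$, where $G/N$ acts through the algebra automorphisms of $B_p(N)=\mathcal O N e$ induced by conjugation by $K$. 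The decisive point --- the only place $G=N\cent GP$ is really used --- is that, because $K$ centralises the defect group $P$ of $B_p(N)$, this action of $G/N$ on $B_p(N)$ is by \emph{inner} automorphisms whose associated class in $H^2(G/N,Z(B_p(N))^\times)$ is trivial. Granting this, $\mathcal O G e\cong B_p(N)\otimes_{\mathcal O}\mathcal O[G/N]$; and since $G/N$ is a $p'$-group, $\mathcal O[G/N]$ is a product of matrix algebras over $\mathcal O$ with principal block $\mathcal O$. Matching the trivial module on either side identifies $e_{B_p(G)}$ with $1\otimes e_0$, where $e_0$ is the principal block idempotent of $\mathcal O[G/N]$; hence $B_p(G)\cong B_p(N)$ as $\mathcal O$-algebras, by an isomorphism compatible with the inclusion $\mathcal O N e\subseteq\mathcal O G e$. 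Equivalently, $\mathcal O G e_{B_p(G)}$ induces a Morita equivalence $B_p(G)\to B_p(N)$ whose effect on characters is restriction; in particular $\chi|_N$ is irreducible for every $\chi\in\irr{B_p(G)}$, and $\chi\mapsto\chi|_N$ is a bijection onto $\irr{B_p(N)}$. It is natural because it is restriction along $N\le G$.

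\emph{The obstacle.} Everything rests on the decisive point above: that conjugation by $K$ induces only inner automorphisms of $B_p(N)$, with no cohomological twist. This is genuine block theory --- it is essentially Alperin's theorem on isomorphic blocks --- and it is exactly what fails without the hypothesis $G=N\cent GP$: in $\alt_5\triangleleft\sym_5$ with $p=5$ the two degree-$3$ characters of $B_5(\alt_5)$ are fused by $\sym_5$, and correspondingly $\sym_5\neq\alt_5\,\cent{\sym_5}{P}$. One would prove the decisive point by exploiting that $K$ fixes $P$ pointwise --- feeding this, via the compatibility of the Brauer homomorphism $\mathrm{Br}_P$ with $\cent GP$-conjugation together with Brauer's first and third main theorems, into enough of the local structure of $B_p(N)$ to force the action to be inner --- and then using $p\nmid[G:N]$ to kill the residual $2$-cocycle.
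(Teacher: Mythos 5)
The paper does not actually prove this statement: its ``proof'' is a two-line attribution to Alperin \cite{A} (for $G/N$ solvable) and Dade \cite{D} (in general) --- this is the classical theorem on isomorphic blocks, quoted as known. You have in effect recognized this, since your ``decisive point'' is, by your own admission, ``essentially Alperin's theorem on isomorphic blocks.'' So there is no competing argument in the paper to compare yours against; the only question is whether your outline constitutes a proof, and it does not: the entire content of the theorem is concentrated in the step you leave as a black box, namely that conjugation by $K\le\cent GP$ induces inner automorphisms of $B_p(N)$ with trivial associated class in $H^2(G/N,Z(B_p(N))^\times)$. Your surrounding frame (reduction to $G=NK$ with $K$ a $p'$-group centralising $P$, the crossed-product decomposition of $\mathcal O Ge$, the $\sym_5/\alt_5$ sanity check at $p=5$) is correct and is indeed the modern architecture of the Alperin--Dade argument, but it is scaffolding around an unproved core.

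Moreover, your sketched repair of that core is not just incomplete but partly wrong. The final step ``use $p\nmid[G:N]$ to kill the residual $2$-cocycle'' fails as a general principle: a $p'$-group can carry a nontrivial class in $H^2(-,\mathcal O^\times)$ (for $p$ odd, a twisted group algebra of $C_2\times C_2$ over $\CC$, or over a sufficiently large $\mathcal O$, can be $M_2(\CC)$ rather than $\CC[C_2\times C_2]$), so coprimality of $|G/N|$ to $p$ alone does not trivialize the cocycle; only its values in $1+J(Z(B_p(N)))$ die for that reason. The triviality of the cocycle, like the innerness of the action, has to be extracted from the hypothesis $K\le\cent GP$ (this is precisely Dade's $G[b]$-type analysis), or, for the principal block, anchored by the fact that $1_G$ extends $1_N$. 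Since the result is standard and the paper itself only cites it, the honest fix is to do the same --- cite Alperin and Dade --- rather than gesture at a derivation whose key steps are the theorem being proved.
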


\begin{proof}
The case where $G/N$ is solvable was proved in \cite{A} and the general case
in \cite{D}. 
\end{proof}

\begin{thm}
Assume that Conjecture C is true for all finite groups.
Let $p$ and $q$ be primes. Assume that $G$ is a finite group of order divisible by $p$ and $q$. If ${\rm Irr}_{p'}(B_p(G))={\rm Irr}_{q'}(B_q(G))$,   then $p=q$.
\end{thm}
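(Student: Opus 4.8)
The approach is induction on $|G|$. Suppose the statement is false and let $G$ have minimal order among the counterexamples, so that $p\neq q$, $pq\mid |G|$ and $\irrp p{B_p(G)}=\irrp q{B_q(G)}=:X$. Every $\chi\in X$ lies in both $\irrp p{B_p(G)}$ and $\irrp q{B_q(G)}$, hence $\chi(1)$ is coprime to $pq$; in particular $G$ satisfies the hypothesis of Conjecture~C, and since Conjecture~C is assumed true there are $P\in\syl pG$ and $Q\in\syl qG$ with $[P,Q]=1$. This commuting condition, which is inherited by every section of $G$, is all we retain; the plan is to manufacture a character lying in exactly one of $B_p(G),B_q(G)$ and having degree prime to the pertinent prime, contradicting $X=\irrp p{B_p(G)}=\irrp q{B_q(G)}$.

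I would then run structural reductions on $G$. Since $\oh{p'}G\le\ker{B_p(G)}$ and $\oh{q'}G\le\ker{B_q(G)}$, the normal $\{p,q\}'$-subgroup $\oh{\{p,q\}'}G$ lies in the kernel of both principal blocks, so passing to $G/\oh{\{p,q\}'}G$ preserves the hypothesis and the divisibility $pq\mid|\cdot|$; minimality gives $\oh{\{p,q\}'}G=1$. Next I would eliminate nontrivial abelian normal subgroups: a minimal normal $p$-subgroup is a $q'$-group (hence in $\ker{B_q(G)}$), a minimal normal $q$-subgroup is a $p'$-group (hence in $\ker{B_p(G)}$), and a careful comparison of $\irrp p{B_p(G)}$ and $\irrp q{B_q(G)}$ with their analogues modulo such a subgroup should, via minimality, force a contradiction. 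Thus $G$ has a minimal normal subgroup $N=S_1\times\cdots\times S_n$ that is a direct product of $G$-conjugates of a non-abelian simple group $S$, with $\cent GN=1$. The aim is now to reach, for at least one $\ell\in\{p,q\}$, a situation in which $\ell\mid|S|$, $S$ inherits commuting Sylow subgroups for $\ell$ and for the other prime, and the pair $(G,N)$ satisfies the hypotheses of Theorem~\ref{isomblocks}, i.e.\ $G/N$ is an $\ell'$-group and $G=N\cent GP$ for a Sylow $\ell$-subgroup $P$.

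Granting such an $N$ with, say, $\ell=p$: Theorem~\ref{thsimples} applied to $S$ puts us in alternative~(a), producing an $\Aut(S)_p$-invariant $\alpha\in\irrp p{B_p(S)}$ with $\alpha\notin\irr{B_q(S)}$. Lemma~\ref{perm} then yields a $P$-invariant $\gamma\in\irr N$ assembled from $P$- and $\Aut$-conjugates of $\alpha$ distributed among the simple factors of $N$; since membership in a principal block is preserved by automorphisms and passes to direct products, $\gamma(1)$ is prime to $p$, $\gamma\in\irr{B_p(N)}$ and $\gamma\notin\irr{B_q(N)}$. As $B_p(N)$ is $G$-invariant, Theorem~\ref{isomblocks} gives a unique $\chi\in\irr{B_p(G)}$ with $\chi_N=\gamma$, so $\chi(1)=\gamma(1)$ is prime to $p$ and $\chi\in\irrp p{B_p(G)}=X\subseteq\irr{B_q(G)}$. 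But $\chi$ lies over $\gamma$, and $B_q(G)$ covers only the $G$-invariant block $B_q(N)$ of $N$, forcing $\gamma\in\irr{B_q(N)}$ --- a contradiction. If instead $\ell=q$, one repeats the argument with $p$ and $q$ exchanged, using alternative~(b) of Theorem~\ref{thsimples}.

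The main obstacle is the second paragraph. Principal blocks do not descend cleanly through normal $p$- or $q$-subgroups --- characters of $B_q(G/V)$ inflate into $B_q(G)$, but $B_q(G)$ may pick up further characters trivial on $V$ --- so removing abelian normal subgroups requires a precise analysis of how $\irrp p{B_p(-)}$ and $\irrp q{B_q(-)}$ transform under such quotients. And once $N$ is obtained, one must reconcile the dichotomy~(a)/(b) of Theorem~\ref{thsimples}, which we cannot choose, with the divisibility of $|G/N|$ by $p$ or by $q$ and with the requirement $G=N\cent GP$, in order to land in the hypotheses of Theorem~\ref{isomblocks}; this matching is where the substantive work lies. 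Everything after that reduction is bookkeeping with principal blocks of direct products and with covering of blocks over a normal subgroup.
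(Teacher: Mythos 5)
Your outline matches the paper's in broad strokes (induction, Conjecture~C supplies commuting Sylow subgroups, reduction to a semisimple minimal normal subgroup, then Theorem~\ref{thsimples} plus Lemma~\ref{perm}), but the endgame is built on the wrong tool and this is a genuine gap, not just "substantive work" to be filled in. You want to pass the $P$-invariant character $\gamma\in\irrp{p}{B_p(N)}\setminus\irr{B_q(N)}$ up to $G$ via Theorem~\ref{isomblocks}, which requires $G/N$ to be a $p'$-group with $G=N\cent GP$. There is no reason for either condition to hold --- $G/N$ can perfectly well have order divisible by both $p$ and $q$ --- and no amount of matching the dichotomy (a)/(b) against the structure of $G/N$ will arrange it. The paper avoids this entirely: it extends $\tau$ to $PN$ (Corollary 8.16 of \cite{isbook2}, using that $\tau$ is $P$-invariant and $N\cap P$-issues are handled by the simple structure) and then invokes Murai's result \cite[Lemma 4.3]{murai}, which produces some $\chi\in\irrp{p}{B_p(G)}$ lying \emph{over} $\tau$ with no hypothesis on $G/N$. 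Covering of blocks then forces $\tau\in\irr{B_q(N)}$, the contradiction. Relatedly, you apply Theorem~\ref{thsimples} to $S$ without securing its hypothesis $pq\mid|S|$: killing $\oh{\{p,q\}'}G$ does not rule out a semisimple minimal normal $N$ that is, say, a $p'$-group with $q\mid|N|$. The paper's Step~5 disposes of this case with a separate argument (Isaacs--Smith \cite[Corollary 3]{IS} to find a nonlinear $q'$-degree character in $\irr{B_q(QN)}$, then Murai again), which your proposal has no analogue of.

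The elimination of abelian minimal normal subgroups, which you flag as ``the main obstacle'' and leave unresolved, is also where the paper does real block-theoretic work: for $N$ a minimal normal $p$-group and $M=\cent GN$ one shows $B_p(G)$ is the unique block covering $B_p(M)$ (via regularity and Brauer's third main theorem), uses the Frattini argument to get $G=M\cent GQ$ so that Theorem~\ref{isomblocks} \emph{does} legitimately apply on the $q$-side, and concludes $\irrp{p}{G/M}=\{1_G\}$, whence $G=M$ by \cite[Lemma 2.2]{NRS1}; a further step then shows $N$ central leads to a contradiction by induction. So Theorem~\ref{isomblocks} is the right tool for the abelian steps (where the $p'$/$q'$-quotient hypotheses are actually verified), and Murai's lemma is the right tool for the final semisimple step. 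As written, your proposal has identified the correct skeleton but is missing the two key lemmas (Murai, Isaacs--Smith) that make the non-abelian half work, and has not carried out the abelian half at all.
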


\begin{proof}
We argue by induction on $|G|$. Assume that $p\ne q$.
 By Conjecture C, we know that $[P,Q]=1$ for some $P \in \syl pG$ and $Q \in \syl q G$.

\smallskip

\textit{Step 0. If $1\neq N\lhd G$, then $p$ divides $|N|$ or $q$ divides $|N|$.} 

Otherwise by \cite[Theorem 9.9(c)]{nbook} $${\rm Irr}_{p'}(B_p(G/N))={\rm Irr}_{p'}(B_p(G))={\rm Irr}_{q'}(B_q(G))={\rm Irr}_{q'}(B_q(G/N))$$ and by induction we are done. 

\smallskip

\textit{Step 1. Let $L$ be a proper normal subgroup of $G$.
Then $G/L$ has order  divisible by $p$ or $q$.} 
\smallskip

Suppose that $G/L$ has $p'$ and $q'$-order.
We claim that ${\rm Irr}_{p'}(B_p(L))={\rm Irr}_{q'}(B_q(L))$. Indeed, let $\theta\in{\rm Irr}_{q'}(B_q(L))$.
Then there exists $\chi\in{\rm Irr}(B_q(G))$ over $\theta$. Then $\chi$
has $q'$-degree by  \cite[Theorem 11.29]{isbook} and
therefore  $\chi\in{\rm Irr}_{p'}(B_p(G))$, by hypothesis.
 Therefore  $\theta\in{\rm Irr}_{p'}(B_p(L))$. 
 By symmetry, the claim is proved. Therefore,
 the proof of Step 1 follows by using the inductive hypothesis.
 
 \smallskip

\textit{Step 2. Let $N$ be a minimal normal subgroup of $G$ and suppose that $N$ is 
an elementary abelian $p$-group.  Then $G=\cent GN\cent GQ$.} 
\smallskip

Write $M=\cent G N$ and $L=M\cent G Q$. We have that
$Q\subseteq\cent G P\subseteq\cent G N=M$ and $G/M$ is a $q'$-group. Also by the Frattini argument $G=M\norm G Q$ and $L\lhd G$. Notice that $G/L$ is a $q'$-group and a $p'$-group (because $P\subseteq \cent G Q\subseteq L$). Then we use Step 1.

\smallskip

\textit{Step 3. Let $N$ be a minimal normal subgroup of $G$ and suppose that $N$
is abelian.  Then $G=\cent GN$.}

\smallskip

By Step 0, we may assume that $N$ is a $p$-group or
a $q$-group. By symmetry, assume that $N$ is a $p$-group. Write $M=\cent G N$.
We prove first that $B_p(G)$ is the only $p$-block of $G$ covering $B_p(M)$. Let $B$ be a $p$-block of $G$ covering $B_p(M)$ and let $D$ be a defect group of $B$. Then $N\subseteq D$ by  \cite[Theorem 4.8]{nbook} and $\cent G D\subseteq M=\cent G N$. Then $B$ is regular with respect to $M$ (\cite[Lemma 9.20]{nbook}) and hence by \cite[Theorem 9.19]{nbook} we have that $B=B_p(M)^G=B_p(G)$ by the third main theorem (see \cite[Theorem 6.7]{nbook}). Hence $B_p(G)$ is the only $p$-block of $G$ covering $B_p(M)$.
In particular, we have that  ${\rm Irr}(G/M)\subseteq{\rm Irr}(B_p(G))$.

 Next, we prove that $G=M$. Recall that $G/M$ is a $q'$-group, because $[Q,N]=1$.
 By Step 2,  we have $G=M\cent G Q$. Hence by Theorem \ref{isomblocks}, we have 
 that the restriction map

$${\rm res}:{\rm Irr}(B_q(G))\rightarrow{\rm Irr}(B_q(M))$$ is a bijection. 
We claim that ${\rm Irr}_{p'}(G/M)=\{1_G\}$. Indeed, if $\chi\in{\rm Irr}_{p'}(G/M)$, then $\chi\in{\rm Irr}_{p'}(B_p(G))={\rm Irr}_{q'}(B_q(G))$. By Theorem \ref{isomblocks}, $\chi_M$ is irreducible and since $\chi$ lies over $1_M$ we have $\chi_M=1_M$. Hence $\chi=1_G$ by the uniqueness
of the restriction map. Thus ${\rm Irr}_{p'}(G/M)=\{1_{G/M}\}$ and $G=M$ by \cite[Lemma 2.2]{NRS1}.

\smallskip

\textit{Step 4. Let $N$ be a minimal normal subgroup of $G$, then $N$ is not  abelian.} 
\smallskip

Suppose the contrary and assume without loss of generality that $N$ is an elementary abelian $p$-group, so by Step 3 we have $G=\cent G N$ and $N\subseteq\zent G$. By \cite[Theorem 9.10]{nbook} we have that $B_p(G/N)$ is the unique $p$-block of $G/N$ contained in $B$. We claim that ${\rm Irr}_{p'}(B_p(G/N))={\rm Irr}_{q'}(B_q(G/N))$. Indeed, we have that $${\rm Irr}_{p'}(B_p(G/N))\subseteq{\rm Irr}_{p'}(B_p(G))={\rm Irr}_{q'}(B_q(G))={\rm Irr}_{q'}(B_q(G/N))$$ where we have used \cite[Theorem 9.9]{nbook} in the last equality. On the other hand, let $\chi\in{\rm Irr}_{q'}(B_q(G/N))$, so $\chi\in{\rm Irr}_{q'}(B_q(G))={\rm Irr}_{p'}(B_p(G))$ and $N\subseteq{\rm Ker}(\chi)$. Since $B_p(G/N)$ is the only $p$-block of $G/N$ contained in $B_p(G)$, we have that $\chi\in{\rm Irr}_{p'}(B_p(G/N))$ and the claim is proven. By using the inductive hypothesis,
we have that $p$ does not divide $|G/N|$.  Therefore,
$\{1_G\}={\rm Irr}(B_p(G/N))={\rm Irr}_{p'}(B_p(G/N))={\rm Irr}_{q'}(B_q(G/N))$ and $q\nmid |G/N|$ by  \cite[Lemma 2.1]{NRS1}). Hence $q\nmid |G|$ and this is a contradiction.

\smallskip 
\textit{Step 5. Let $N$ be a minimal normal subgroup of $G$, then $pq$ divides $|N|$.}

 \smallskip
 Suppose that $N$ is a $p'$-group. We claim first that $NQ$ does not have a normal $q$-complement. Indeed, suppose the contrary and let $X\lhd NQ$ be a normal $q$-complement. Then $N\cap X$ is a normal $q$-complement of $N$ and by the minimality of $N$ we have that either $N\cap X=1$ or $N\cap X=N$. If $N\cap X=N$, $N$ is $q'$ and $p'$, contradiction with Step 0. If $N\cap X=1$ then $N\cong XN/X$ is a $q$-group, which is a contradiction with Step 4. Therefore $NQ$ does not
 have a normal $q$-complement. By \cite[Corollary 3]{IS} there is $\tau \in \irr{B_q(QN)}$
 non-linear of $q'$-degree. Therefore $1\ne \tau_N\in{\rm Irr}_{q'}(B_q(N))$. By \cite[Lemma 4.3]{murai} we have that there is  some $\gamma \in {\rm Irr}_{q'}(B_q(G))$ lying over $\tau_N$.
 By hypothesis, we have that $\gamma$ is in the principal $p$-block
 of $G$, and therefore $\tau_N$ is in the principal $p$-block of $N$, which is a contradiction since $N$ is a $p'$-group and $\tau_N\neq 1$.

 \smallskip
 
 \textit{Final Step.} If $N$ is a minimal normal subgroup of $G$, then $N$ is semisimple by Step 4. 
 Suppose that $N$ is a direct product of all the different $G$-conjugates of a simple group
 $S$ of order divisible by $pq$.  Suppose that (a) of Theorem \ref{thsimples} holds and let $\alpha$ be the character in ${\rm Irr}_{p'}(B_p(S))$ (not in ${\rm Irr}(B_q(S))$) which is ${\rm Aut}(S)_p$-invariant, given by Theorem \ref{thsimples}. 
 Notice that any $G$-conjugate or ${\rm Aut}(G)$-conjugate of $\alpha$ is in the principal $p$-block and not in the principal $q$-block os $S$.
By Lemma \ref{perm}, there exists
$\tau \in{\rm Irr}_{p'}(B_p(N))$ which is $P$-invariant, and such that each of its
factors does not belong to the principal $q$-block. In particular,
$\tau$ does not belong to the principal $q$-block of $N$. Now $\tau$ extends to $PN$ by \cite[Corollary 8.16]{isbook2}.
By  \cite[Lemma 4.3]{murai} there is $\chi\in{\rm Irr}_{p'}(B_p(G))$ lying over $\tau$. Then $\chi\in{\rm Irr}_{q'}(B_q(G))$ and thus $\tau\in{\rm Irr}_{q'}(B_q(N))$, which is a contradiction. Assuming (b) in Theorem \ref{thsimples} and reasoning analogously we get again a contradiction. \end{proof}

\section{Conjecture C and $p$-solvable groups}

As of the writing of this article, we can only prove Conjecture C for $p$-solvable 
 groups if the following (expected) consequence of the Alperin--McKay conjecture is true.

\begin{conj}\label{overMcKay}
Suppose that $N$ is normal in $G$, and let $P \in \syl pN$.
Then there is a bijection $$^*:{\rm Irr}_{p'}(B_p(N)) \rightarrow {\rm Irr}_{p'}(B_p(\norm NP))$$
such that for each $\theta \in {\rm Irr}_{p'}(B_p(N))$, there is a bijection
$f_\theta: \irr{B_p(G)|\theta} \rightarrow \irr{B_p(\norm GP)|\theta^*}$ such that
$\chi(1)/\theta(1)=f_\theta(\chi)/\theta^*(1)$ for all $\chi \in \irr{G|\theta}$.
\end{conj}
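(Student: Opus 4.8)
\medskip

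The plan is to deduce Conjecture \ref{overMcKay} from the inductive Alperin--McKay condition for the non-abelian simple groups involved in $N$; since $P$ is a full Sylow $p$-subgroup of $N$, only the maximal-defect instances for principal blocks -- that is, the inductive McKay condition for principal blocks -- are needed. I would run Sp\"ath's reduction of the Alperin--McKay conjecture by induction on $|G|$, carrying the normal subgroup $N$ along. Write $H=\norm GP$ and $H_0=\norm NP=H\cap N$; since $P\in\syl pN$, the Frattini argument gives $G=NH$, so $H/H_0\cong G/N$, and Brauer's third main theorem identifies $B_p(H_0)$ with the Brauer correspondent of $B_p(N)$ and $B_p(H)$ with that of $B_p(G)$. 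In the language of the Clifford theory of character triples, it then suffices to produce a bijection $*$ as in the statement that is equivariant for the action of $G/N\cong H/H_0$ and for which, for each $\theta\in\irrp p{B_p(N)}$, the triples $(G_\theta,N,\theta)$ and $(H_{\theta^*},H_0,\theta^*)$ are block-isomorphic in Sp\"ath's sense -- an isomorphism of ordinary character triples that in addition matches $p$-blocks and defect groups and is cohomologically compatible, written $(G_\theta,N,\theta)\geq_b(H_{\theta^*},H_0,\theta^*)$. Indeed, the maps $f_\theta$ then follow formally from the Clifford--Fong--Reynolds correspondence, which reduces the comparison to $G$-invariant $\theta$, together with the character correspondence attached to such an isomorphism via the Butterfly Theorem.

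First I would make the routine reductions. As $\oh{p'}N$ is characteristic in $N$, hence normal in $G$, and lies in the kernel of every character of a principal block, we may replace $G$ by $G/\oh{p'}N$ and assume $\oh{p'}N=1$. Then I would induct along a $G$-chief series inside $N$: choose a minimal normal subgroup $N_1$ of $G$ with $N_1\subseteq N$. If $N_1$ is abelian it is a $p$-group or a $p'$-group, and one combines the inductive hypothesis for $(G/N_1,N/N_1)$ with the Clifford theory of blocks -- through the Glauberman correspondence and Harris--Kn\"orr when $N_1$ is a $p'$-group, and through the (more delicate) block theory of normal $p$-subgroups when $N_1$ is a $p$-group -- to transport both the bijection $*$ and the character-triple isomorphisms, the pieces being reassembled by transitivity of $\geq_b$ and the Butterfly Theorem. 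If $N_1$ is non-abelian it is a direct product of the $G$-conjugates of a non-abelian simple group $S$; here the inductive Alperin--McKay condition for $S$ supplies an $\Aut(S)$-equivariant maximal-defect McKay bijection for $B_p(S)$ together with the corresponding block-isomorphism of character triples relating $S$ to the normalizer of its Sylow $p$-subgroup. Taking the product over the factors, and using the bookkeeping of Lemma \ref{perm} to select $P$-invariant constituents, yields $*$ and the required triples at the level of $N_1$; combining again with the inductive hypothesis for $(G/N_1,N/N_1)$ completes the step.

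The part I expect to be the genuine obstacle is the cohomological compatibility: upgrading a mere equivariant bijection $\irrp p{B_p(N)}\to\irrp p{B_p(H_0)}$ to one for which the projective representations of $G/N\cong H/H_0$ associated with $\theta$ and $\theta^*$ agree, so that $\irr{B_p(G)\mid\theta}$ and $\irr{B_p(H)\mid\theta^*}$ correspond degree-for-degree and the $f_\theta$ exist at all. This is exactly what the relation $\geq_b$ encodes, and it is what makes both the product step for a non-abelian chief factor and the passage through successive chief factors delicate, since one must simultaneously track the relevant scalar $2$-cocycles, the $p$-blocks, and their defect groups. By contrast, the Frattini identification $G=NH$, Brauer's third main theorem, the block-theoretic bookkeeping for the abelian chief factors, and the application of Lemma \ref{perm} are all routine.
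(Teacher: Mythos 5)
There is nothing in the paper to compare your argument against: the statement you were given is presented there as a \emph{conjecture}, introduced as ``the following (expected) consequence of the Alperin--McKay conjecture,'' and the authors give no proof of it --- Theorem \ref{psolv} is proved \emph{conditionally} on it, and they only remark parenthetically that one \emph{expects} $^*$ to be $\norm GP$-equivariant with $(G_\theta,N,\theta)$ and $(\norm GP_\theta,\norm NP,\theta^*)$ isomorphic as character triples. Your text is likewise not a proof but a strategy outline, and you yourself name the missing core: producing a $G/N$-equivariant bijection $^*$ together with block-isomorphisms of character triples $(G_\theta,N,\theta)\geq_b(\norm GP_{\theta^*},\norm NP,\theta^*)$. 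Granting that, the existence of the $f_\theta$ with the degree-ratio property is indeed formal (Clifford correspondence plus the character bijections attached to such an isomorphism), but naming the relation $\geq_b$ and the Butterfly Theorem does not establish it; that cohomological step is the entire content of the statement, exactly as the authors' phrasing suggests.

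Beyond the admitted obstacle, your plan has two further unconditional gaps. First, it assumes the inductive Alperin--McKay condition for every simple group involved in $N$; restricting attention to principal blocks and maximal defect does not remove this hypothesis, and it is not known for all simple groups, so at best you would obtain a conditional result (which is all the paper claims anyway). Second, what is being asserted here is a \emph{relative} Alperin--McKay statement over an arbitrary normal subgroup $N\trianglelefteq G$, with the ratio $\chi(1)/\theta(1)=f_\theta(\chi)(1)/\theta^*(1)$ required for all $\chi$ above $\theta$; this is strictly stronger than the conclusion of the published reduction theorems, and the inductive steps you sketch (Harris--Kn\"orr and Glauberman for a $p'$ chief factor, the normal $p$-subgroup case, the product construction over the simple factors of a non-abelian chief factor, and reassembly by transitivity of $\geq_b$) are precisely where the degree ratios and the blocks must be tracked simultaneously. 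None of that is carried out. So the proposal should be read as a reasonable research plan consistent with how the authors expect the conjecture to be approached, not as a proof of the statement.
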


(In fact, it is expected that $^*$ is $\norm GP$-equivariant, and that
the character triples $(G_\theta, N, \theta)$ and $(\norm GP_\theta, \norm NP, \theta^*)$
are isomorphic.)

In any case, we shall only need Conjecture \ref{overMcKay} in the case where
$G/N$ is a $p'$-group.

In our proof, we shall also need a McKay divisibility theorem, which was made possible
after M. Geck proved a remarkable conjecture on Glauberman correspondents. 


\medskip

In the following, we follow the proof of \cite[Theorem A]{Ri19}, and then use Geck's result.

\begin{thm}\label{mckaydiv}
Let $G$ be a $p$-solvable group and let $P\in{\rm Syl}_p(G)$. Then there is a bijection $$^*:{\rm Irr}_{p'}(G) \rightarrow {\rm Irr}_{p'}(\norm GP)$$
such that $\chi^*(1)$ divides $\chi(1)$ and $\chi(1)/\chi^*(1)$ divides $|G:\norm GP|$.
\end{thm}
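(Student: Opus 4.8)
The plan is to follow the strategy of \cite[Theorem A]{Ri19}, which establishes a McKay-type bijection with divisibility control for $p$-solvable groups by induction, and to feed in Geck's theorem on the Glauberman correspondence at the place where the original argument only produced a bijection without the divisibility conclusion. First I would set up an induction on $|G|$. If $\oh{p}{G}\neq 1$, say $N=\oh{p}{G}$, then $N\leq P$ and $N\leq \norm GP$; passing to $G/N$ and $P/N$, the character sets ${\rm Irr}_{p'}(G)={\rm Irr}_{p'}(G/N)$ and ${\rm Irr}_{p'}(\norm GP)={\rm Irr}_{p'}(\norm GP/N)$ are unchanged, degrees are unchanged, and $|G:\norm GP|=|G/N:\norm GP/N|$, so we win by induction. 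Likewise if $\oh{p'}{G}\neq 1$ we cannot simply quotient, so the substantive reduction is to the case $\oh{p'}{G}=K\neq 1$; here one uses the Glauberman correspondence between $\Irr_P(K)$ and $\Irr(\cent KP)$ together with Clifford theory over $K$, exactly as in \cite{Ri19}, to break $\Irr_{p'}(G)$ into pieces indexed by $P$-invariant characters $\theta$ of $K$ and their Glauberman correspondents $\theta^*$.

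The heart of the matter is then the following: for a $P$-invariant $\theta\in\Irr(K)$ with Glauberman correspondent $\theta^*\in\Irr(\cent KP)$, one needs a degree-preserving (up to the controlled divisor) bijection between $\Irr(G_\theta\mid\theta)$ and the analogous set for the normalizer situation, and crucially one needs the ratio $\theta(1)/\theta^*(1)$ to divide $|K:\cent KP|$ (and hence a divisor of $|G:\norm GP|$). This last divisibility statement is precisely what Geck proved: the degree of a $P$-invariant irreducible character of the $p'$-group $K$ is divisible by the degree of its Glauberman correspondent, with quotient dividing $|K:\cent KP|$. So the key steps in order are: (1) reduce to $\oh{p}{G}=1$; (2) let $K=\oh{p'}{G}\neq 1$ and apply Clifford theory to reduce to stabilizers $G_\theta$ for $P$-invariant $\theta\in\Irr(K)$, noting $\norm GP$ normalizes the pair and $\norm GP_{\theta^*}$ is the corresponding stabilizer; (3) when $G_\theta<G$, apply induction after a character-triple reduction to handle the degree ratio $\chi(1)/\theta(1)$ inductively and multiply by the Glauberman ratio $\theta(1)/\theta^*(1)$ supplied by Geck's theorem, checking the product of divisors still divides $|G:\norm GP|$; (4) when $G_\theta=G$, i.e. $\theta$ is $G$-invariant, use that $K$ is a central-type-free situation or pass to a character triple $(G,K,\theta)$ isomorphic to one with $K$ replaced by $\cent KP$, again invoking Geck for the scalar $\theta(1)/\theta^*(1)$.

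I expect the main obstacle to be the bookkeeping in step (3)–(4): ensuring that the divisor of $|G:\norm GP|$ accumulated from the inductive step on $G_\theta$ (where one works modulo $K$ or with $K$ replaced by its centralizer) combines correctly with the Glauberman divisor from Geck's theorem so that the total still divides $|G:\norm GP|$, and that the two bijections (the inductive McKay bijection on the smaller group and the Glauberman correspondence) can be glued into a single bijection $\Irr_{p'}(G)\to\Irr_{p'}(\norm GP)$ respecting the degree relation. This requires being careful that $\norm GP$ interacts with $K$ via $\norm GP\cap K=\cent KP$ (which holds since $K$ is a $p'$-group and a Frattini/coprime argument gives $\norm GP K=G$ only after restricting appropriately) and that stabilizers match under the correspondence. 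The character-triple isomorphism machinery of \cite{isbook2} handles the gluing formally, but verifying its hypotheses in the $p$-solvable, possibly non-coprime-to-the-quotient setting is the delicate point; everything else is a faithful transcription of the argument in \cite{Ri19} with Geck's divisibility statement inserted in place of the bare Glauberman bijection.
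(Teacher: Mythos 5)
Your proposal is essentially the paper's proof: both follow \cite[Theorem A]{Ri19} closely, reducing to $\oh{p}{G}=1$, decomposing ${\rm Irr}_{p'}(G)$ over the $P$-invariant characters of $K=\oh{p'}{G}$ via Clifford theory and the Glauberman correspondence, and inserting Geck's theorem exactly where you say, to get that $\theta(1)/\theta^*(1)$ divides the relevant index. The one point you flag as delicate but leave unresolved is precisely where your outline is off: $G=K\norm GP$ is false in general, and the paper (following Ri19) instead sets $S/K=\oh{p}{G/K}$ and $P_0=P\cap S\in\syl pS$, so that the Frattini argument gives $G=K\norm G{P_0}$ with $\norm G{P_0}<G$ proper (since $\oh pG=1$); the Glauberman correspondence is then taken with respect to $P_0$, the ratio $\theta_i(1)/\theta_i^*(1)$ divides $|K:\cent K{P_0}|=|T_i:\norm{T_i}{P_0}|$, and the induction descends to $\norm G{P_0}$ rather than jumping directly to $\norm GP$.
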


\begin{proof} We argue by induction on $|G|$. As in the proof of \cite[Theorem A]{Ri19} we may assume that  $\oh {p} G =1$, and hence $K=\oh {p'} G >1$.

Let $S/K=\oh {p}{G/K}$, and notice that $P_0=P\cap S$ is a Sylow $p$-subgroup of $S$. 
By the Frattini argument we have that $G=K\norm G {P_0}$ and  $\norm G {P_0}<G$ since $\oh{p}G=1$. Let $\theta_1, \ldots, \theta_s$ be a complete set of representatives of the orbits of the action of $\norm G P$ on the $P$-invariant irreducible characters of $K$. By \cite[Lemma 9.3]{Na18}, we have that
$${\rm Irr}_{p'} (G) = {\rm Irr}_{p'} (G\,|\,\theta_1) \cup \cdots \cup {\rm Irr}_{p'} (G\,|\,\theta_s)$$
is a disjoint union.  Fix $\theta_i \in \irr K$ and observe  that  $\theta_i$ is also $P_0$-invariant. Let $\theta_i^*\in \irr{\cent K {P_0}}$ be the Glauberman correspondent of $\theta_i$ (see \cite[Theorem 2.9]{Na18}, for instance) and let $T_i$ be the stabilizer of $\theta_i$ in $G$. Since the Glauberman correspondence and the action of $\norm G {P_0}$ commute (see \cite[Lemma 2.10]{Na18}), it follows that $ \norm {T_i} {P_0} =T_i\cap \norm G {P_0}$ is the stabilizer of $\theta_i^*$ in $\norm G {P_0}$.

Again as in the proof of \cite[Theorem A]{Ri19} we obtain a bijection 


$$ ^*: {\rm Irr}_{p'}(T_i\, |\, \theta_i) \rightarrow {\rm Irr}_{p'} ( \norm {T_i} {P_0} \, |\, \theta_i^*)$$ satisfying $\psi(1)/\psi^*(1)=\theta_i(1)/\theta_i^*(1)$. Hence we have that $\psi^*(1)$ divides $\psi(1)$ by the main result of \cite{Geck}. Moreover, since $G=\norm G{P_0}K$ we have that $|K:\cent K{P_0}|=|T_i:\norm {T_i}{P_0}|$ and since $\theta_i(1)/\theta_i^*(1)$ divides $|K:\cent K{P_0}|$ we conclude that $\psi(1)/\psi^*(1)$ divides $|T_i:\norm {T_i}{P_0}|$.


\smallskip

The remaining part of the proof proceeds exactly as in the proof of \cite[Theorem A]{Ri19}. 
\end{proof}

\begin{thm}\label{psolv}
Let $G$ be a finite $p$-solvable group.
If Conjecture \ref{overMcKay} is true, then
Conjecture C is true for $G$.
\end{thm}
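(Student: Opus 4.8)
The plan is to argue by induction on $|G|$, reducing to a situation where a minimal normal subgroup $N$ of $G$ controls both principal blocks, and then to play the hypothesis $\irrp{p}{B_p(G)}=\irrp{q}{B_q(G)}$ against itself using Theorem~\ref{mckaydiv} on the $p$-side and (the $p'$-quotient case of) Conjecture~\ref{overMcKay} on the $q$-side. First I would assume $p\neq q$ and aim for a contradiction (or rather, I would prove Conjecture~C for $G$, which for $p$-solvable $G$ amounts to showing that if no commuting Sylow $p$- and $q$-subgroups exist then some $\chi\in\irrp{p}{B_p(G)}$ has degree divisible by $q$, or symmetrically; since $G$ is $p$-solvable we also have flexibility from the Hall/Fong theory). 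The first reductions are standard: using \cite[Theorem 9.9]{nbook} and \cite[Theorem 9.9(c)]{nbook} one pushes to quotients by normal subgroups, so I may assume every minimal normal subgroup is either a $p$-group or (being $p$-solvable) a $p'$-group on which the relevant block structure is forced; in fact, as in Step~0--Step~3 of the proof of Theorem~2.5 above, one reduces to the case where a minimal normal subgroup $N$ is elementary abelian, and then further analyzes $\cent GN$.

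Next, the heart of the matter: I would take $P\in\syl pG$ and pass to $\norm GP$ via Theorem~\ref{mckaydiv}, which gives a degree-divisibility-respecting bijection ${}^*:\irrp{p}{G}\to\irrp{p}{\norm GP}$, and I would simultaneously invoke Conjecture~\ref{overMcKay} (in the case $G/N$ a $p'$-group, which is all we need, applied with a suitable normal subgroup) to get a parallel bijection controlling $\irrp{q}{B_q(G)}$ in terms of a $p$-local subgroup — note that the principal $q$-block is $P$-stable since $P$ normalizes it, and the Alperin--McKay-type statement transports $q'$-degrees compatibly. The strategy is that these two bijections, combined with the hypothesis that the two character sets are literally equal, force a rigidity: the $q$-parts of degrees on both sides must match, which via \cite[Theorem 11.29]{isbook} and the block-covering arguments \cite[Theorem 9.19]{nbook}, \cite[Theorem 9.20]{nbook} propagates down to $N$ and then back up, yielding that $G$ has a normal $p$-complement centralized appropriately by a Sylow $q$-subgroup, hence $[P,Q]=1$ for some $Q\in\syl qG$.

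The main obstacle I expect is the bookkeeping in combining Theorem~\ref{mckaydiv} with Conjecture~\ref{overMcKay}: the former is an honest theorem but only gives $\chi^*(1)\mid\chi(1)$ with $\chi(1)/\chi^*(1)\mid|G:\norm GP|$ (not an equality of degrees), while the latter gives exact ratio equalities $\chi(1)/\theta(1)=f_\theta(\chi)/\theta^*(1)$; reconciling an inequality of divisibilities on the $p$-side with an exact ratio on the $q$-side, over the same set of characters, is delicate and will require carefully isolating the $q$-part of each degree and arguing that the $p$-local passage on the $q$-block side does not change it — here I would lean on the fact that $\norm NP/\cent NP$ and the Glauberman correspondence interact cleanly for $p$-solvable $N$, exactly as exploited in the proof of Theorem~\ref{mckaydiv}. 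A secondary subtlety is handling the case where $N$ is a $p'$-group but $NQ$ (or $NP$) fails to have a normal complement: there I would reuse the Isaacs--Navarro-type input \cite[Corollary 3]{IS} and \cite[Lemma 4.3]{murai}, exactly as in the Final Step of Theorem~2.5, to manufacture a nonlinear $q'$-degree (resp.\ $p'$-degree) character in the relevant principal block, contradicting the hypothesis.
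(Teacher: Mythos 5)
There is a genuine gap here, on two levels. First, your second paragraph slides into the hypothesis of Conjecture B (``the two character sets are literally equal'') and, accordingly, into the machinery of the proof of Theorem~\ref{thsimples}/Theorem~2.5 (minimal normal subgroups, block covering, \cite[Corollary 3]{IS}, \cite[Lemma 4.3]{murai}). Theorem~\ref{psolv} is about Conjecture C, whose hypothesis is only that every member of $\irrp{p}{B_p(G)}$ has $q'$-degree and vice versa; a nonlinear $q'$-degree character in $B_q(N)$ contradicts nothing unless you can force its degree to be divisible by $p$. The normal-subgroup analysis actually needed is much lighter than what you sketch: induction applied to $G/N$ only gives $[P,Q]\subseteq N$, and one uses this for $N=\oh{p}{G}$ or $N=\oh{p'}{G}$; the real dichotomy is whether $\oh{p'}{G}=1$. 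When $\oh{p'}{G}=1$, Fong's theorem gives $\irr{B_p(G)}=\irr{G}$, which is what makes Theorem~\ref{mckaydiv} (a statement about $\irrp{p}{G}$, not about blocks) usable at all. When $L=\oh{p'}{G}>1$, the Hall--Higman $1.2.3$ lemma forces $Q\le L$, and it is Conjecture~\ref{overMcKay} applied over $L$ (with $G/L$ a $q'$-group) that transports the degree condition to $\irrp{q}{B_q(\norm GQ)}$. The two tools are used in disjoint cases, not ``simultaneously,'' so the reconciliation problem you flag as the main obstacle does not arise.

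Second, and more importantly, the step that actually produces commuting Sylow subgroups is absent from your plan. Knowing that all characters of $\norm GP/P'$ (resp.\ of $\norm GQ/Q'X$ with $X=\oh{q'}{\norm GQ}$) have $q'$-degree (resp.\ $p'$-degree) must be converted into a structural statement, and this is done by the It\^o--Michler theorem: it yields a normal Sylow $q$-subgroup (resp.\ $p$-subgroup) in that quotient, hence $[Q,P]\subseteq P'$ (resp.\ $[P,Q]\subseteq Q'$), and then coprime action (\cite[Corollary 3.29]{isbook}) lifts triviality of the action on $P/P'$ to triviality on $P$. Without It\^o--Michler and the coprime-action lift, the ``rigidity'' you hope for -- matching $q$-parts of degrees propagating to a normal $p$-complement -- has no mechanism behind it, and the block-covering bookkeeping you propose will not supply one.
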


\begin{proof}
By \cite{MN20}, we only need to prove that if all ${\rm Irr}_{p'}(B_p(G))$ have $q'$-degree
and all ${\rm Irr}_{q'}(B_q(G))$ have $p'$-degree, then $[P,Q]=1$ for some
$P \in \syl pG$ and $Q \in \syl qG$. Let $N$ be a normal subgroup of $G$. Since the hypothesis is satisfied by $G/N$, by induction, we know that $[P,Q] \sbs N$ for some $P \in \syl pG$ and $Q\in \syl qG$.


Suppose that $\oh{p'}G=1$.  Then we know that
$\irr{B_p(G)}=\irr G$, by Theorem 10.20 of \cite{nbook}. Hence, all the irreducible characters in $\irrp{p}G$ have $q'$-degree. Let $L=\oh pG$ and let $P \in \syl pG$ and $Q\in \syl qG$ such that $[P,Q]\sbs L$.  Therefore $Q$ normalizes $P$ and $|G:\norm GP|$ is not divisible by $q$. By Theorem \ref{mckaydiv}, we have that
all $\irr{\norm GP/P'}$ have $q'$-degree. By the It\^o-Michler theorem we have that $QP'$ is normal in $\norm G P$, and hence $[QP',P]\subseteq P'$. Then $Q$ acts trivially on $P/P'$, and therefore $Q$ acts trivially on $P$ by coprime action (see \cite[Corollary 3.29]{isbook}). Thus $[Q,P]=1$ and we are done  in this case.


Suppose that  $L=\oh{p'}G>1$ and let  $P \in \syl pG$ and $Q\in \syl qG$ with $[Q,P] \sbs L$. By Hall--Higman 1.2.3 Lemma (see \cite[Theorem 3.21]{isbook}) we have that $\cent {G/L}{\oh {p}{G/L}}\subseteq \oh {p}{G/L}$. Since $QL/L\subseteq \cent {G/L}{PL/L}$ we conclude that $QL/L\subseteq \oh {p}{G/L}$ and hence $Q\subseteq L$. Thus $G/L$ is $q'$ and $G=L\norm GQ$, in particular $|G:\norm GQ|$ is not divisible by $p$.
We claim that all the irreducible characters in ${\rm Irr}_{q'}(B_q(\norm GQ))$ have $p'$-degree. Indeed, let $\chi^*\in{\rm Irr}_{q'}(B_q(\norm GQ))$ and let $\theta^*\in{\rm Irr}_{q'}(B_q(\norm LQ))$ under $\chi^*$. Let $\theta\in{\rm Irr}_{q'}(B_q(L))$ be the pre-image of $\theta^*$ given by the bijection in Conjecture \ref{overMcKay}. Again using Conjecture \ref{overMcKay}, let $\chi\in{\rm Irr}(B_q(G)|\theta)$ be such that $f_\theta(\chi)=\chi^*$, so we know that 
$$\frac{\chi(1)}{\theta(1)}=\frac{\chi^*(1)}{\theta^*(1)}.$$ Then $\chi(1)/\theta(1)$ is  not divisible by $q$ and thus $\chi\in{\rm Irr}_{q'}(B_q(G))$. By hypothesis, $\chi(1)$ is not divisible by $p$. Hence $\chi^*(1)/\theta^*(1)$ is not divisible by $p$, and therefore, since $\theta^*$ is of $p'$-degree we have that $\chi^*(1)$ is not divisible by $p$, and the claim follows.

Let $X=\oh{q'}{\norm GQ}$. Then all the elements in ${\rm Irr}(\norm GQ/Q'X)={\rm Irr}_{q'}(B_q(\norm GQ))$ have degree not divisible by $p$.
By the It\^o-Michler theorem, we have that this group has a normal Sylow $p$-subgroup (which is a Sylow $p$-subgroup of $G$, since $|G:\norm GQ|$ is $p'$),
and therefore $P$ centralizes $Q/Q'$. By coprime action, $[P,Q]=1$.
\end{proof}

\section{Conjecture C and Simple Groups}

Note that the ``if" direction for simple groups follows from the work of Malle--Navarro in \cite{MN20} and their proof of Brauer's height-zero conjecture for principal blocks \cite{MN21}:
\begin{prop}\label{prop:Cif} Let $S$ be a simple group. If there exists $P\in\Syl_p(S)$ and $Q\in\Syl_q(S)$ such that $[P,Q]=1$, then $q$ does not divide $\chi(1)$ for all $\chi \in \irrp{p}{B_p(S)}$
and $p$ does not divide $\chi(1)$ for all $\chi \in \irrp{q}{B_q(S)}$.
\end{prop}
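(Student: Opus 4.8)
The plan is to prove Proposition~\ref{prop:Cif} by reducing to the theorem of Malle--Navarro \cite{MN20} on the distribution of $p'$-degree characters in principal blocks, combined with their resolution of the height-zero conjecture for principal blocks \cite{MN21}. First I would recall that, by \cite{MN21}, since $[P,Q]=1$ forces $P$ to normalize $Q$ and vice versa, in particular the Sylow subgroups involved have useful structural constraints; more importantly, the main result of \cite{MN20} asserts precisely that $q\mid\chi(1)$ for some $\chi\in\irrp{p}{B_p(S)}$ if and only if the Sylow $p$-subgroup $P$ of $S$ does \emph{not} commute elementwise with some Sylow $q$-subgroup $Q$ (this is the ``only if'' half of Conjecture~C for simple groups, already known). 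So the statement here is essentially the contrapositive of that direction, and the proof should consist of citing \cite{MN20} for both the assertion about $\irrp{p}{B_p(S)}$ and, by symmetry in $p$ and $q$, the assertion about $\irrp{q}{B_q(S)}$.

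The key steps, in order, would be: (1) observe that the hypothesis $[P,Q]=1$ is symmetric in $p$ and $q$, so it suffices to prove that $q\nmid\chi(1)$ for all $\chi\in\irrp{p}{B_p(S)}$ and then invoke symmetry for the other statement; (2) invoke \cite{MN20} (the relevant theorem there, likely their Theorem~A or its block-theoretic refinement via \cite{MN21}), which characterizes exactly when $\irrp{p}{B_p(S)}$ contains a character of degree divisible by $q$ in terms of non-commuting Sylow subgroups; (3) conclude that since we are assuming commuting Sylow subgroups exist, no such character of degree divisible by $q$ can occur. If \cite{MN20} is stated only for all simple groups under CFSG, that is fine since $S$ is assumed simple.

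One subtlety I would need to address is whether \cite{MN20} literally states the block-theoretic version (characters in $B_p(S)$ of $p'$-degree) or only a version about $\irr{S}$ or about all $p'$-degree characters; if the former, one combines it with \cite{MN21} to restrict attention to the principal block and with the height-zero statement to ensure the $p'$-degree characters of $B_p(S)$ are exactly the ones of height zero, which is what \cite{MN20} controls. Another point is the direction of the implication: \cite{MN20} may phrase things as ``if Sylow subgroups commute then all $p'$-degree characters in $B_p(S)$ have $q'$-degree'' directly, in which case the proposition is an immediate citation with a remark about symmetry. I expect the main (indeed only) obstacle to be bookkeeping: pinning down the precise statement in \cite{MN20} and \cite{MN21} that gives the clean biconditional, and making sure the principal-block and height-zero reductions are correctly combined so that ``$\irrp{p}{B_p(S)}$'' in the present paper matches the set of characters addressed there.
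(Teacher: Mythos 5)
Your proposal is correct and follows essentially the same route as the paper: the paper's proof cites \cite[Section 3]{MN20} to deduce that $[P,Q]=1$ forces $P$ and $Q$ to be abelian, applies the principal-block case of Brauer's height-zero conjecture \cite{MN21} to identify $\irrp{p}{B_p(S)}$ with $\irr{B_p(S)}$ (and likewise for $q$), and then concludes by \cite[Theorem 3.6]{MN20}. The only bookkeeping point you left open --- how to match $\irrp{p}{B_p(S)}$ with the set controlled by \cite{MN20} --- is resolved exactly as you anticipated, via the abelian-Sylow consequence plus \cite{MN21}.
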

\begin{proof}
In \cite[Section 3]{MN20}, the condition that $[P,Q]=1$ for some $P\in\Syl_p(S)$ and $Q\in\Syl_q(S)$ is studied.  In particular, we see from the results there that this condition implies that $P$ and $Q$ are abelian, and hence $\irrp{p}{B_p(S)}=\irr{B_p(S)}$ and $\irrp{q}{B_q(S)}=\irr{B_q(S)}$ by the principal block case of Brauer's height-zero conjecture \cite{MN21}. Hence the statement follows from \cite[Theorem 3.6]{MN20}. 
\end{proof}

Next we consider the cases easily dealt with in GAP:
\begin{prop}\label{prop:Csporadic}
Conjecture C holds for sporadic simple groups, alternating and symmetric groups $\alt_n$ and $\sym_n$ with $n\leq 8$, the Tits group $\tw{2}\type{F}_4(2)'$, and groups of Lie type with exceptional Schur multipliers.
\end{prop}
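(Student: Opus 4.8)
The plan is to verify Conjecture C directly for each of the finitely many groups $S$ on the list (sporadic simple groups, $\alt_n$ and $\sym_n$ for $n\le 8$, $\tw{2}\type{F}_4(2)'$, and the finitely many groups of Lie type with exceptional Schur multipliers) using the GAP Character Table Library, which contains the ordinary character table of $S$ together with the decomposition matrices — equivalently the block distribution — for all primes dividing $|S|$. For a fixed $S$ it suffices to range over all pairs of distinct primes $p,q$ dividing $|S|$; when $p\nmid|S|$ or $q\nmid|S|$ the statement is vacuous since $B_p(G)$ is then the whole group or the relevant divisibility conditions are empty, so only finitely many pairs need to be checked in each case.

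For each such pair $(p,q)$ the verification has two halves matching the two implications of Conjecture C. For the ``if'' direction one may simply invoke Proposition \ref{prop:Cif}, which already establishes it for all simple groups; so the work is really only in the ``only if'' direction. There one computes, from the character table and the block information stored in the library, the sets $\irrp{p}{B_p(S)}$ and $\irrp{q}{B_q(S)}$ — i.e. the $p'$-degree characters in the principal $p$-block and the $q'$-degree characters in the principal $q$-block — and checks the hypothesis that every character in the first set has $q'$-degree and every character in the second has $p'$-degree. Whenever this hypothesis holds, one must exhibit commuting Sylow subgroups $P\in\Syl_p(S)$, $Q\in\Syl_q(S)$ with $[P,Q]=1$. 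This last point is where a little care is needed: commutativity of Sylow subgroups is not literally a character-table invariant, but by \cite[Section 3]{MN20} (as used in Proposition \ref{prop:Cif}) the relevant condition is equivalent, for $S$ simple, to $P$ and $Q$ both being abelian together with the purely arithmetic/permutation-character condition of \cite[Theorem 3.6]{MN20}, all of which are visible from the character table; alternatively, for the short list at hand one can confirm $[P,Q]=1$ by an explicit computation with a permutation or matrix representation of $S$ in GAP. In practice the hypothesis of the ``only if'' direction almost never holds — typically one already finds a character of $p'$-degree in $B_p(S)$ whose degree is divisible by $q$ — so the conclusion is reached by contradiction and the Sylow computation is needed only in the handful of remaining pairs.

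The main obstacle is not mathematical depth but organization and completeness of the case check: one must make sure the list of groups with exceptional Schur multipliers is exhaustive (these are tabulated, e.g., in the ATLAS and in \cite[Table ...]{MT11}-type references) and that for each of them every prime pair has been treated, including small primes where the principal block may be large; and one must reconcile the library's block data with the definition of $\irrp{p}{B_p(S)}$ used here. Since all of these groups are small and their tables are fully available, no new theory is required, and the proposition follows. One concludes by remarking that the computations were carried out with GAP and its Character Table Library, and that the data files are available from the authors.
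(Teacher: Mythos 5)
Your proposal is correct and matches the paper's approach exactly: the paper's entire proof is the one-line statement that the claim ``can be seen using GAP and its Character Table Library,'' and your write-up is simply a more detailed description of that same finite computational verification (including the sensible observation that the ``if'' direction is already covered by Proposition \ref{prop:Cif} and that $[P,Q]=1$ can be checked via explicit representations or the criteria of \cite[Section 3]{MN20}).
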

\begin{proof}
This can be seen using GAP and its Character Table Library.  
\end{proof}

\subsection{Conjecture C for Alternating and Symmetric Groups}

Here we prove Conjecture C in the case of alternating groups $\alt_n$ and symmetric groups $\sym_n$. Note that it follows from \cite[Proposition 3.3]{MN20} that $[P,Q]\neq 1$ for every Sylow $p$-subgroup $P$ and Sylow $q$-Subgroup $Q$ of $\sym_n$ or $\alt_n$.
\begin{prop}\label{prop:Caltsym}
Let $G$ be an alternating or symmetric group $\alt_n$ or $\sym_n$ with $n\geq 9$ and let $p,q$ be primes dividing $|G|$. 
Then either there exists $\chi\in\irrp{p}{B_p(S)}$ with degree divisible by $q$ or there exists $\chi\in\irrp{q}{B_q(S)}$ with degree divisible by $p$.
\end{prop}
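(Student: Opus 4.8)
The goal is to show that for $G = \alt_n$ or $\sym_n$ with $n \geq 9$ and distinct primes $p, q$ dividing $|G|$, one cannot have all of $\irrp{p}{B_p(G)}$ of $q'$-degree and simultaneously all of $\irrp{q}{B_q(G)}$ of $p'$-degree. By Proposition~\ref{prop:Caltsym}'s preamble, $[P,Q]\neq 1$ for all Sylow subgroups, so this is exactly Conjecture~C for these groups (the ``if'' direction being vacuous). I would argue by contradiction: assume both divisibility conditions hold and derive a contradiction from the combinatorics of hook lengths and cores.

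First I would reduce to $\sym_n$. Characters of $B_p(\sym_n)$ are indexed by partitions of $n$ with a fixed $p$-core $\kappa_p$ of size $r_p := n - p w_p$ (where $w_p = \lfloor n/p\rfloor$ is the weight), and $\chi^\lambda$ has $p'$-degree precisely when $\lambda$ has empty $p$-core, i.e. $\lambda$ is a ``$p$-core tower'' maximal object — more usefully, $p \nmid \chi^\lambda(1)$ iff the $p$-core of $\lambda$ has size $r_p$ and the $p$-quotient has all components of a very restricted shape; but the cleanest tool is that $\chi^\lambda \in \irrp{p}{B_p(\sym_n)}$ iff $\lambda$ has $p$-core equal to the ``staircase-type'' core $\kappa_p$ determined by $n$ and all parts of the $p$-quotient are ``small'' in the sense that no $p$-hook can be removed from any component, which forces the $p$-quotient to be a tuple of one-column or one-row shapes — this is Macdonald's description. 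The key point I want: within the principal block, the $p'$-degree characters are few and explicitly describable, and one can compute the exact power of $q$ dividing each of their degrees via the hook-length formula and the $q$-adic valuation $\nu_q(\chi^\lambda(1)) = \nu_q(n!) - \sum \nu_q(h)$ over hooks $h$ of $\lambda$.

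The heart of the argument is then to exhibit, for at least one of the two primes, a character violating the required condition. I expect the cleanest route is: take the prime, say $q$, with $\lfloor n/q \rfloor$ small (the larger prime, or the one whose weight is smallest), so that $\irrp{q}{B_q(\sym_n)}$ contains very few characters — indeed if the $q$-weight $w_q$ is $1$, then $B_q(\sym_n)$ has cyclic defect group $C_q$ and $\irrp{q}{B_q(\sym_n)}$ consists of the $q-1$ characters obtained by adding a $q$-hook to the $q$-core $\kappa_q$ in the various legal positions, plus the trivial; for these one can compute $\nu_p$ of the degree directly and show $p \mid \chi(1)$ for a suitable such character unless $n$ is of a special small form. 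Then a symmetric (but typically easier) analysis on $B_p$ either already gives a bad character or pins $n$ down to finitely many exceptional values, which one checks — and these should already be covered by Proposition~\ref{prop:Csporadic} or handled by hand. When both weights $w_p, w_q \geq 2$ the principal blocks are large and it should be comparatively easy to write down a partition $\lambda$ with $p$-core $\kappa_p$, empty... wait, with $p$-quotient forcing $p'$-degree, yet whose hook lengths include a multiple of $q$ not cancelled — e.g. by choosing the $p$-quotient to place a hook of length divisible by $q$. Finally, I would transfer the conclusion from $\sym_n$ to $\alt_n$ using Clifford theory: $\irrp{p}{B_p(\alt_n)}$ is obtained by restriction, and degrees either stay the same or halve, so a $q$-divisibility of a character degree in $\sym_n$ descends to $\alt_n$ (since $q$ is odd when $q \neq 2$; the case $q = 2$, or $p = 2$, of Conjecture~C for simple groups may already be in \cite{MN20}/\cite{NRS1} and can be invoked), with care taken over the partitions fixed by the outer automorphism (self-conjugate $\lambda$).

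**Main obstacle.** The delicate part is the bookkeeping for small $q$-weight (especially $w_q = 1$), where the principal block is small and one genuinely needs $n$ to avoid a short list of shapes for a character of the required degree-divisibility to exist; ruling these out — or matching them against the GAP-verified small cases — is where the real work lies. A secondary technical annoyance is ensuring that the constructed ``bad'' partition in the large-weight case indeed lies in the principal block (correct $p$-core) while its $p$-quotient still guarantees $p'$-degree; this is a constrained construction and needs the explicit core of $n$ for the relevant prime, together with the observation that one has enough freedom in the $p$-quotient (total size $w_p \geq 2$) to inject a $q$-divisible hook length.
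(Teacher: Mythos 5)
Your outline shares the paper's basic framework: work in $\sym_n$, use the fact that $\irr{B_p(\sym_n)}$ consists of the $\chi_\lambda$ with $\lambda$ of $p$-core $(b)$ where $n=mp+b$, compute degrees by the hook-length formula, exhibit a single partition whose degree has the right $p$- and $q$-valuations, and descend to $\alt_n$ by arranging that $\lambda$ is not self-conjugate. However, the proposal stops exactly where the proof begins. The entire content of the argument is the explicit construction: writing $n=mp+b$, $mp=wq+r$, and the $p$-adic and $q$-adic expansions of $mp$, and then producing in each arithmetic configuration a concrete hook-shaped partition --- e.g.\ $(1^{mp-r-1},1+r)$, $(1^{mp-r-1},1+r,b)$, $(1^{mp-a_1q^{t_1}-1},1+a_1q^{t_1},b)$, $(1^{mp},b)$, $(b+1,mp-1)$, and so on --- together with a verification of the exact $p$- and $q$-parts of the resulting degrees. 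This requires on the order of ten separate cases, several of which have exceptional sub-configurations (e.g.\ $b+1=p$ with $p\mid(m-1)$, or $b=(q-a_1)q^{t_1}$) where the first candidate partition fails and a different one must be found, sometimes after switching the roles of $p$ and $q$. Your assertion that in the large-weight case ``it should be comparatively easy to write down'' such a partition is precisely the part that is not easy and is not done; without it there is no proof.

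Two further points. First, the small-weight situation you flag as the main obstacle (cyclic or abelian defect) is dispatched in one line: if a Sylow $p$-subgroup is abelian, combine the principal-block case of Brauer's height zero conjecture \cite{MN21} with \cite[Theorem 3.5]{GMV19}; no cyclic-defect bookkeeping is needed, and one may then assume $q<p$ and $m>1$ throughout. Second, you do not need Macdonald's classification of $p'$-degree characters (and your first formulation of it, ``$p'$-degree iff empty $p$-core,'' is incorrect): since the statement only asks for the existence of one character, it suffices to check, for the single constructed $\lambda$, that the $p$-part of $\prod h_\lambda$ equals that of $n!$ while the $q$-part is strictly smaller --- which is what the paper does directly from the hook lengths of near-hook partitions.
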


\textit{The strategy:} We first recall some facts and give the basic idea of the proof.    The set $\irr{\sym_n}$ is  indexed by partitions of $n$, and two characters $\chi_\lambda, \chi_\mu$ corresponding to partitions $\lambda, \mu$ lie in the same $p$-block if $\lambda, \mu$ have the same $p$-core (and similar for $q$). In particular, writing $n=pm+b$ with $0\leq b<p$, the set $\irr{B_p(\sym_n)}$ consists of the characters $\chi_\lambda$ such that $\lambda$ has $p$-core $(b)$. Further, recall that the degree of the character $\chi_\lambda$ is given by the hooklength formula $\chi_\lambda(1)=\frac{n!}{\prod h_\lambda}$, where the denominator is the product of all hooklengths in the tableaux corresponding to the partition $\lambda$. Further, if $\lambda$ is not self-conjugate, then the corresponding character restricts irreducibly to $\alt_n$. 

So, our strategy will be (up to switching $p$ and $q$) to illustrate a non-self-conjugate partition $\lambda$ of $n$ with $p$-core $(b)$ such that the numerator in the hooklength formula has the same $p$-part as the denominator and larger $q$-part than the denominator.  Our proof will require several technical cases and analysis of the degrees given by the hooklength formula.

\textit{Setting notation:} Throughout our proof, we will assume without loss that $q<p$. We will write $mp=wq+r$ with $0\leq r<q$, and let $n=mp+b$ with $0\leq b<p$.  Note that we may assume that $m>1$, since otherwise a Sylow $p$-subgroup is abelian, and the result follows from \cite[Theorem 3.5]{GMV19}, together with the principal block version of Brauer's height zero conjecture \cite{MN21}. In studying the degrees of the characters that we construct, several expressions will appear repeatedly.  Hence we define once and for all:
\[Y:=\frac{(mp+b)\cdots(mp+1)}{b!}, \quad\quad Y':=\frac{(mp+b-1)\cdots(mp+1)}{(b-1)!},\]\[ Z:=\frac{(mp+r)\cdots(mp+1)}{r!}, \quad\hbox{ and }\quad Z':=\frac{(mp-1)\cdots(mp-r)}{r!}\]
which we see are each relatively prime to $p$.  

It will also be useful to set the $p$-adic and $q$-adic expansions of $mp$: Let 
\[mp=a_1q^{t_1}+\cdots a_kq^{t_k}=b_1p^{s_1}+\cdots b_{k'}p^{s_{k'}}\] with $t_i<t_{i+1}$, $s_j<s_{j+1}$, $0<a_i<q$, and $0<b_j<p$ for each appropriate value of $i,j$.  With these established, we further define
\[X:=\frac{\prod_{i=1}^{a_1q^{t_1}}(mp-i)}{\prod_{i=1}^{a_1q^{t_1}}i} \quad\hbox{ and }\quad X':=\frac{\prod_{i=1}^{b_1p^{s_1}}(mp-i)}{\prod_{i=1}^{b_1p^{s_1}}i}.
\] Note that $mp\neq a_1q^{t_1}$, since $p\nmid a_1$.  In the situation that the expression $X'$ becomes relevant, we will see that also $mp\neq b_1p^{s_1}$.  Finally, for an integer $x$, we will write $(x)_p$ (or just $x_p$ if it is clear) for the $p$-part of $x$.

\begin{proof}[Proof of Proposition \ref{prop:Caltsym}]
 
Keep the notation above.

(I) First, suppose that $r>0$.   If $b=0$, so that $n=mp$, then consider $\lambda=(1^{mp-r-1}, 1+r)$. Then $\chi_\lambda\in\irr{B_p(\sym_n)}$ and $\chi_\lambda(1)=Z'$, 
which is $p'$ but divisible by $q$ since $mp-r=wq$ and $r<q$. 
If $b\neq 0$ and $0<r<b$, consider $\lambda=(1^{mp-r-1}, 1+r, b)$. If $0<b<r$, consider $\lambda=(1^{mp-r-1}, 1+b, r)$. In these cases, $\chi_\lambda\in\irr{B_p(\sym_n)}$ and $\chi_\lambda(1)= \displaystyle{Y\cdot Z'\cdot\frac{|b-r|}{mp-r+b}}$. 
  Note that the $q$-part of the numerator of $Y$ must be at least as large as the $q$-part of the denominator. (Each remainder modulo $b$ appears once as a factor in the numerator.) Hence, this character still has degree that is $p'$ and divisible by $q$, with the possible exception of if $q\mid b$ and $b!(wq+b)$ has larger $q$-part than $(mp+1)\cdots(mp+b)$. In the latter case, $(1^{mp}, b)$, giving degree $Y'$, works instead.

If $b=r>0$, note that $r+1<wq$, as otherwise we would have $r=q-1$ and $w=1=m$, contradicting our assumption that $m>1$.  Then let $\lambda=(1^r, r+1, wq-1)$, so that  
$\chi_\lambda(1)=Z\cdot Z'\cdot ({wq-r-1})/({2r+1})$, which is $p'$ and divisible by $q$ unless $2r+1=p$ and $p\mid (m-1)$ or if $2r+1=q$ and $q\nmid w$. In the latter cases, the partition $(1^{wq-2}, 1+r, 1+r)$ works, unless we were in the case $2r+1=p$ with $p\mid (m-1)$ and $r+1=q$ with $q\nmid w$. In this case, if $q\neq 2$ (and hence $r\neq 1$), take $\lambda=(1^{mp}, r)$, which lies in $B_p(\sym_n)$ and has degree $Y'=(mp+1)\cdots(mp+r-1)/(q-2)!$.  This is relatively prime to $p$, and is divisible by $q$ since there must be a number between $mp$ and $mp+q-1=mp+r$ divisible by $q$, but neither $mp=(w+1)q-1$ nor $n=mp+r=wq+2q-2$ can be divisible by $q$.  If $q=2$, we have $r=1, q=2, p=3$, and the character corresponding to $(1^{n-2}, 2)$ lies in $B_2(\sym_n)$ and has degree $n-1=3m=2w+1$, which is odd and divisible by $3$.

From now on, we may therefore assume that $r=0$, so that $mp=wq$.

(II) First, assume that $a_1q^{t_1}<b_1p^{s_1}$.  If $b=0$, so $n=mp=wq$, consider $\lambda=(1^{mp-a_1q^{t_1}-1}, 1+a_1q^{t_1})$. Then the corresponding degree is 
$X$, which we see is equal to \[X=\frac{\prod_{i=0}^{a_1q^{t_1}-1}(i+a_2q^{t_2}+\cdots a_kq^{t_k})}{\prod_{i=1}^{a_1q^{t_1}}i}
=
\frac{\prod_{i=1}^{a_1q^{t_1}}(b_1p^{s_1}-i+b_2p^{s_2}+\cdots b_{k'}p^{s_{k'}})}{\prod_{i=1}^{a_1q^{t_1}}i}.\] Note that the $q$-part of this is $q^{t_2}/q^{t_1}$, which is divisible by $q$.  Further, the $p$-part is $1$, since $a_1q^{t_1}<b_1p^{s_1}$ implies that the $p$-part of $-i+b_1p^{s_1}+\cdots+b_{k'}p^{s_{k'}}$ is the same as that of $i$ for $1\leq i\leq a_1q^{t_1}$.

 Now suppose that $b>0$, so $n=mp+b=wq+b$. If $a_1q^{t_1}\neq b$, consider either $(1^{mp-a_1q^{t_1}-1}, 1+a_1q^{t_1}, b)$ or $(1^{mp-a_1q^{t_1}-1}, 1+b, a_1q^{t_1})$, depending on whether $b$ is larger or smaller than $a_1q^{t_1}$.  
 Then the corresponding character lies in $B_p(\sym_n)$ and has degree $X\cdot Y \cdot  \displaystyle{\frac{|b-a_1q^{t_1}|}{n-a_1q^{t_1}}}$. 
 Note that since $a_1q^{t_1}<b_1p^{s_1}$, the $p$-part of $|b-a_1q^{t_1}|_p\leq p^{s_1}$, and from this we see $|b-a_1q^{t_1}|$ and $n-a_1q^{t_1}=mp+b-a_1q^{t_1}$ have the same $p$-part.  Hence this characters is a member of $\irrp{p}{B_p(\sym_n)}$.  Further, it is still divisible by $q$, except possibly if $|b-a_1q^{t_1}|_q<(n-a_1q^{t_1})_q$. This can only happen if $(b)_q\geq q^{t_2}$.  
 In the latter case, consider again the partition $(1^{mp}, b)$.  The degree is $Y'$ and hence we have removed $\frac{mp+b}{b}$ from the expression  $Y$, which is $p'$ and, from before, has $q$-part of the numerator at least as large as that of the denominator. Since in our situation $(mp+b)_q=q^{t_1}<q^{t_2}\leq (b)_q$, this degree $Y'$ is also divisible by $q$.

Now assume $b=a_1q^{t_1}$. If $b+1\neq p$ or $p\nmid (m-1)$, we take $\lambda=(1^{mp-b-2}, b+1, b+1)$, with $\displaystyle{\chi_\lambda(1)= Y\cdot\frac{(mp-2)\cdots(mp-b-1)}{(b+1)!}}.$ Then $\chi_\lambda\in\irr{B_p(\sym_n)}$ and $p\nmid\chi_\lambda(1)$ due to the assumption $b+1\neq p$ or $m-1$ is not divisible by $p$.  Further, $\chi_\lambda(1)$ is divisible by $q$ since the $q$-part of the numerators of each of the two fractions is at least that of the denominators, as before, and in this case, the factor $(mp-b)$ is divisble by $q^{t_2}$, but no factor in the denominator is. 
Now, if $p=b+1=1+a_1q^{t_1}$ and $p\mid (m-1)$, this forces also $b_1p^{s_1}=p$, as $mp-p=\sum {b_ip^{s_i}} - p$ must be divisible by $p^2$. Here consider the partition $(1^{mp-p}, b+p)$, which gives a character with degree ${(mp+b-1)\cdots(mp-p+1)}/{(p+b-1)!}$ in $B_p(\sym_n)$. Note that the only factor in the numerator divisible by $p$ is $mp$, which is divisible by $p$ eactly once. Then since the denominator is divisible by $p$, we see this character lies in $\irrp{p}{B_p(\sym_n)}$. 
Further, since $mp-p+1=\sum_{i\geq 2}a_iq^{t_i}$ in this case, we  see $(mp-p+1+j)_q\geq (j)_q$ for $1\leq j\leq p+b-2$, and that $\displaystyle{\frac{mp-p+1}{p+b-1}=\frac{\sum_{i\geq 2}a_iq^{t_i}}{2a_1q^{t_1}}}$ is divisible by $q$ except possibly if $q=2$ and $t_2=t_1+1$. In the latter case, the character corresponding to $(1^{mp-1}, b+1)$, which has degree $\displaystyle{\frac{mp\cdot\prod_{i=1}^{2^{t_1}-1} (mp+i)}{2^{t_1}!}}$ lies in $\irrp{2}{B_2(\sym_n)}$ and has degree divisible by $p$.

(III) Finally, suppose that $a_1q^{t_1}>b_1p^{s_1}$.  Note here that $mp\neq b_1p^{s_1}$, as $mp\geq a_1q^{t_1}$.   If $b=0$, then reversing the roles of $p$ and $q$ in the corresponding case in (II) above yields a character in $B_q(\sym_n)$ with degree $X'$, which is relatively prime to $q$ but divisible by $p$. 

Hence we assume $b>0$, so $n=mp+b=wq+b$. Note here that $b=w'q+b'$ for some integers $w', b'$ with $0\leq b'<q$, and $B_q(\sym_n)$ consists of those characters whose corresponding partitions have $q$-core $(b')$.

 Now, the partition $(1^{mp-b_1p^{s_1}-1}, b+1, b_1p^{s_1})$ gives a character in $B_q(\sym_n)$ with degree $\displaystyle{Y\cdot X'\cdot \frac{b_1p^{s_1}-b}{n-b_1p^{s_1}}}$.
  Note that the third factor is not divisible by $p$ nor $q$, since $p\nmid b$ and $b_1p^{s_1}-b<a_1q^{t_1}$ so $b_1p^{s_1}-b$ and $n-b_1p^{s_1}=mp-(b_1p^{s_1}-b)$ have the same $q$-part.  Further, $p\nmid Y$, and also $q\nmid Y$ as long as $b<(q-a_1)q^{t_1}$. So if $b<(q-a_1)q^{t_1}$, this character lies in $\irrp{q}{B_q(\sym_n)}$ with degree divisible by $p$.

 So, we now assume that $q\mid Y$, so $b\geq (q-a_1)q^{t_1}$.  Then the partition $(b+1, mp-1)$ corresponds to a character in $B_p(\sym_n)$ with degree $\displaystyle{Y\cdot \frac{mp-(b+1)}{b+1}}$, which is divisible by $q$ and is relatively prime to $p$  if $b+1\neq p$ or $p\nmid (m-1)$.  Hence we may now assume that further $p=b+1$ and $p\mid (m-1)$.  
 Then setting $\lambda=(1^{mp}, b)$ yields $\chi_{\lambda}\in\irr{B_p(\sym_n)}$ and $\chi_{\lambda}(1)=Y'$, which is prime to $p$.  Since $q\mid Y$, we have $q\mid Y'$ unless $q\mid b$ and $(mp+b)_q>(b)_q$, which forces $b=(q-a_1)q^{t_1}$.
 
 So we are reduced to the case $q\nmid Y'$, $b=(q-a_1)q^{t_1}=p-1$, and $p\mid (m-1)$. Then the partition $(1^{mp-1}, 1+b)$ gives a character in $B_q(\sym_n)$ with degree $Y'\cdot {mp}/{b}$, which is divisible by $p$ but is not divisible by $q$ since the $q$-part of both $b$ and $mp$ are $q^{t_1}$.

Finally, note that in all cases, the $\lambda$ described is not self-conjugate, and hence the characters restrict irreducibly to $\alt_n$, completing the proof.
\end{proof}

\subsection{Conjecture C for Simple Groups of Lie Type}

Let $r_0$ be a prime and $r:=r_0^a$ be some power of $r_0$.  For $p$ another prime, we denote by $d_p(r)$ the order of $r$ modulo $p$, respectively modulo $4$, if $p$ is odd, respectively $p=2$. Here we will prove the remaining direction of Conjecture C for simple groups of Lie type 
  $S$ defined over $\FF_{r}$.

Several cases here also follow quickly from \cite{MN20}:

\begin{prop}\label{prop:Cdefabetc}
Let $S$ be a simple group of Lie type defined over $\FF_r$, and let $p,q$ be primes such that $[P,Q]\neq 1$ for every Sylow $p$-subgroup $P$ and Sylow $q$-subgroup $Q$ of $S$ but that at least one of the following conditions holds:
\begin{enumerate}
\item $r_0\in\{p,q\}$;
\item a Sylow $p$-subgroup or a Sylow $q$-subgroup of $S$ is abelian; or
\item $d_p(r)\neq d_q(r)$;
\end{enumerate}
 Then either there exists $\chi\in\irrp{p}{B_p(S)}$ with degree divisible by $q$ or there exists $\chi\in\irrp{q}{B_q(S)}$ with degree divisible by $p$.
\end{prop}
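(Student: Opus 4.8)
The plan is first to reduce, via Proposition~\ref{prop:Csporadic}, to the case $S=G/\zent G$ with $G=\bG^F$ quasi-simple of simply connected type in characteristic $r_0$, and then to treat the three hypotheses separately. Since the conclusion is symmetric in $p$ and $q$, in hypothesis (1) we may assume $r_0=p$ and in hypothesis (2) that a Sylow $p$-subgroup of $S$ is abelian. In each case the aim is to exhibit a single character $\chi\in\irrp{p}{B_p(S)}$ with $q\mid\chi(1)$ (or its $(p,q)$-swapped analogue), found among the semisimple or unipotent characters, with its degree controlled through the order formula $\chi_s(1)=[\bG^{\ast F}:\cent{\bG^\ast}{s}^F]_{r_0'}\cdot\psi(1)$ for constituents $\chi_s\psi$ of the Lusztig series $\EC(G,s)$.

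For hypothesis (3), set $d:=d_p(r)$ and $e:=d_q(r)$, so $d\neq e$. The key opening observation is that $B_p(S)\cap B_q(S)$ contains only unipotent characters: a member of $\EC(S,s)$ lies in $B_\ell(S)$ only when $|s|$ is a power of $\ell$ (\cite[Theorem~9.12]{CE04}), and $s=1$ is the unique element whose order is at once a $p$- and a $q$-power. A Sylow $p$-subgroup of $S$ is non-abelian (otherwise hypothesis (2) applies), so $p$ divides the relative Weyl group of a Sylow $\Phi_d$-torus, and, exactly as in the proof of Theorem~\ref{thsimples}, there is a semisimple element $s$ of order $p$ in the centre of such a torus. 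Then $\cent{\bG^\ast}{s}$ contains a Sylow $\Phi_d$-torus, so $[\bG^{\ast F}:\cent{\bG^\ast}{s}^F]$ is prime to $\Phi_d(r)$, hence to $p$, which puts $\chi_s\in\irrp{p}{B_p(S)}$; it then remains to verify that $\cent{\bG^\ast}{s}$ does not contain a full Sylow $\Phi_e$-torus of $\bG^\ast$, which forces $q\mid[\bG^{\ast F}:\cent{\bG^\ast}{s}^F]=\chi_s(1)$, while $\chi_s\notin\irr{B_q(S)}$ by the opening observation. For hypothesis (2), the principal block case of Brauer's height zero conjecture \cite{MN21} gives $\irrp{p}{B_p(S)}=\irr{B_p(S)}$, so it suffices to locate any character of $B_p(S)$ of degree divisible by $q$, and the same semisimple construction applies (after possibly interchanging the roles of $p$ and $q$). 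For hypothesis (1) with $r_0=p$, one uses that in defining characteristic $B_p(S)=\irr S\setminus\{\St_S\}$, so $\irrp{p}{B_p(S)}=\irrp{p}{S}$; since every semisimple character of $G$ has $r_0'$-degree, it is then enough to pick a semisimple element $s$ with $\cent{\bG^\ast}{s}$ of order prime to $q$ (equivalently, a regular semisimple element in a maximal torus of $q'$-order, which exists because $q$ divides $|S|_{r_0'}$ but not the order of every maximal torus), so that $\chi_s(1)=[\bG^{\ast F}:\cent{\bG^\ast}{s}^F]_{r_0'}$ is divisible by $q$. In all three cases one finishes by passing from $G$ to $S=G/\zent G$ exactly as in Theorem~\ref{thsimples}, using that the $p$-part (resp.\ $q$-part) of $|\zent G|$ is trivial.

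I expect the crux to be the verification, in case (3) and in the parallel sub-case of (2), that $s$ may be chosen so that $\cent{\bG^\ast}{s}$ has strictly smaller $\Phi_e$-part (equivalently $q$-part) than $\bG^\ast$: this is precisely where the hypothesis $[P,Q]\neq 1$ has to be invoked, because when $[P,Q]=1$ the Sylow $p$- and $q$-subgroups can be confined to a common torus and no such $s$ exists — this dovetails with the analysis of the commuting condition carried out in \cite[Section~3]{MN20}. In addition, small-rank configurations in which a Sylow $\Phi_d$- or $\Phi_e$-torus is comparatively large must be isolated and settled directly, and when $d$ or $e$ equals $1$ one must take a Coxeter-type torus in place of the split torus in the ``torus of $q'$-order'' argument.
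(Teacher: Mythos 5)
Your overall strategy (semisimple characters, Lusztig-series membership via \cite[Theorem 9.12]{CE04}, the principal-block height zero theorem \cite{MN21} in case (2)) matches the paper's, but two steps do not hold up as written. First, in case (1) you ask for a semisimple $s$ with $\cent{\bG^\ast}{s}$ of order prime to $q$, i.e.\ a regular element in a maximal torus of $q'$-order, and assert such a torus exists because $q$ does not divide the order of every maximal torus. This fails: for $q=2$ and $r$ odd, every maximal torus of $\GL_n(r)$ (hence of $\PGL_n(r)$) has order $\prod_i(r^{d_i}-1)$ with each factor even, so no maximal torus of $q'$-order exists at all (already $\PGL_2(r)$, with torus orders $r\pm1$, is a counterexample). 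What is actually needed is only the weaker statement that $\cent{G^\ast}{s}$ fail to contain a full Sylow $q$-subgroup; the paper obtains such an $s$ by starting from a character of positive $q$-height (whose existence is supplied by \cite[Theorem 5.1]{MN20} under the hypothesis $[P,Q]\neq1$) and replacing it by the semisimple character of the same Lusztig series.

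Second, in case (3) you explicitly defer the crux --- that $\cent{\bG^\ast}{s}$ does not contain a Sylow $\Phi_e$-torus --- and propose to extract it from $[P,Q]\neq1$ via \cite[Section 3]{MN20}. The paper's route is different and closes the gap: order the primes so that $d_p(r)\leq d_q(r)$ and take $1\neq s\in\zent{Q^\ast}$ (an element for the prime with the \emph{larger} $d$, not, as you do, an element of order $p$ in the centre of a Sylow $\Phi_{d_p}$-torus); if $s$ centralized a Sylow $p$-subgroup of $G^\ast$, the first two paragraphs of \cite[Proposition 3.5]{MN20}, applied to $G^\ast$, would force $d_p(r)=d_q(r)$, contradicting hypothesis (3) itself. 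Your version leaves both the choice of direction and the key non-containment unproved, so the argument for (3) is incomplete as it stands. (The descent from $G$ to $S=G/\zent{G}$ and the membership $\chi_s\in\irr{B_p(S)}$ are fine and are justified in the paper by essentially the references you cite.)
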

\begin{proof}
First suppose that $r_0\in\{p,q\}$  and without loss, say $r_0=p$. Then $\irr{B_p(S)}=\irr{S}\setminus\{\mathrm{St}_S\}$. Let $G$ be a quasisimple group of Lie type of simply connected type such that $G/\zent{G}=S$. As in \cite[Theorem 5.1]{MN20}, we let $\chi\in\irr{S}$ be of positive $q$-height, so that it must lie in $B_q(S)$. But then it must be that $\chi$ is the deflation of some $\bar\chi\in\irr{G}$ in a Lusztig series $\mathcal{E}(G, s)$, where $s$ is a semisimple element of the dual group $G^\ast$ that does not centralize a Sylow $q$-subgroup of $G^\ast$ and which lies in $O^{p'}(G^\ast)$. Considering a semisimple character $\chi_s$ in this series instead, we see that $\chi_s(1)=[G^\ast:\cent{G^\ast}{s}]_{p'}$, so $\chi_s$ still has degree divisible by $q$ but lies in $\irrp{p}{B_p(S)}$ as a character of $S$. 

Next, suppose that $r_0\not\in\{p,q\}$ and that a Sylow $p$-subgroup of $S$ is abelian. Then $\irrp{p}{B_p(S)}=\irr{B_p(S)}$ by Brauer's height zero conjecture for principal blocks \cite{MN21}, and hence the proof of \cite[Theorem 5.1]{MN20} yields the desired character.

Finally, assume that $r_0\not\in\{p,q\}$, that no Sylow $p$- or $q$- subgroup of $S$ is abelian, and that $d_p(r)\neq d_q(r)$.
Note that the assumption $[P,Q]\neq 1$ for every choice of Sylow $p$- and Sylow $q$-subgroups of $S$ (and hence analogously for $G$) implies that also $[P^\ast, Q^\ast]\neq 1$ for any Sylow $p$- and $q$- subgroups of the dual group $G^\ast$. (Again, this is pointed out already in \cite[Theorem 5.1]{MN20}.)  
Now, let $d_p(r)\leq d_q(r)$ and 
suppose that there exists $1\neq s\in\zent{Q^\ast}$ for some $Q^\ast\in\Syl_q(G^\ast)$ such that $s$ centralizes a Sylow $p$-subgroup of $G^\ast$. Then the same argument as in the first two paragraphs of \cite[Proposition 3.5]{MN20}, but now applied to $G^\ast$, yields that $d_p(r)=d_q(r)$, a contradiction. Hence, given $1\neq s\in\zent{Q^\ast}$, we have $\cent{G^\ast}{s}$ does not contain a Sylow $p$-subgroup $P^\ast$ of $G^\ast$. Then the corresponding semisimple character $\chi_s$ of $G$ has degree divisible by $p$ but lies in $\irrp{q}{B_q(G)}$.  Further, arguing as in \cite[Theorem 3.5]{GSV19} shows that such $s$ can be chosen so that $\chi_s$ is trivial on $\zent{G}$, completing the proof.
\end{proof}

Our task is now to prove Conjecture C in the case that neither $p$ nor $q$ is the defining characteristic, $d_p(r)=d_q(r)$, and no Sylow $p$- or $q$-subgroup of $S$ is abelian.  We begin with the case of exceptional groups of Lie type, by which we mean the groups $S=\type{G}_2(r)$, $\type{F}_4(r)$, $\type{E}_6^\epsilon(r)$, $\type{E}_7(r)$, $\type{E}_8(r)$, $\tw{3}\type{D}_4(r)$, $\tw{2}\type{G}_2(r)$, $\tw{2}\type{F}_4(r)$, and $\tw{2}\type{B}_2(r)$.

\begin{prop}\label{prop:Cexceptional}
Let $S$ be an exceptional simple group of Lie type  defined over $\FF_r$, and let $p,q$ be primes such that $[P,Q]\neq 1$ for every Sylow $p$-subgroup $P$ and Sylow $q$-subgroup $Q$ of $S$. Then either there exists $\chi\in\irrp{p}{B_p(S)}$ with degree divisible by $q$ or there exists $\chi\in\irrp{q}{B_q(S)}$ with degree divisible by $p$.
\end{prop}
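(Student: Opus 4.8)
The plan is to combine the reductions already established with $d$-Harish--Chandra theory, and to settle the residual cases by an explicit analysis type by type. First I would reduce the problem: by Propositions \ref{prop:Csporadic} and \ref{prop:Cdefabetc} we may assume that $S$ has no exceptional Schur multiplier, that $S\neq\tw{2}\type{F}_4(2)'$, that $r_0\notin\{p,q\}$, that neither a Sylow $p$-subgroup nor a Sylow $q$-subgroup of $S$ is abelian, and --- crucially --- that $d:=d_p(r)=d_q(r)$. Write $S=G/\zent G$ with $G$ simply connected, let $G^\ast$ be dual to $G$, let $\bg{S}^\ast$ be a Sylow $\Phi_d$-torus of $\bG^\ast$, $\bg{L}^\ast=\cent{\bG^\ast}{\bg{S}^\ast}$ the associated minimal $d$-split Levi, and $W_d=\norm{\bG^{\ast F}}{\bg{L}^\ast}/\bg{L}^{\ast F}$ the relative Weyl group. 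Away from a short list of small groups --- those where $p$ or $q$ is bad for $\bG$ or divides $|\zent G|$, treated separately --- Malle's analysis of when Sylow subgroups of groups of Lie type are abelian (\cite{malle14}) forces $p\mid|W_d|$ and $q\mid|W_d|$. Since the primes dividing $|W(\bG)|$ for an exceptional $\bG$ all lie in $\{2,3,5,7\}$, and for such a prime $\ell$ the integer $d_\ell(r)$ is small, the requirement $d_p(r)=d_q(r)$ leaves only $d\in\{1,2\}$; in particular $\bg{L}^\ast$ is a maximal torus of $\bG^\ast$ and $W_d$ is essentially the full Weyl group.

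The first source of test characters is the following. For a nontrivial $q$-element $s\in\zent{Q^\ast}$, with $Q^\ast\in\Syl_q(G^\ast)$, the semisimple character $\chi_s$ of $G$ has $q'$-degree $[G^\ast:\cent{G^\ast}{s}]_{r'}$; since $\cent{\bG^\ast}{s}$ then contains a Sylow $\Phi_d$-torus of $\bG^\ast$ (using $s\in\zent{Q^\ast}$ and $d=d_q(r)$), the block theory of \cite{CE04} places $\chi_s$ in the principal $q$-block $B_q(G)$, and $s$ may be taken of order $q$ so that $\chi_s$ is trivial on $\zent G$, hence a character of $S$. If such an $s$ can be chosen with $\cent{G^\ast}{s}$ not containing a full Sylow $p$-subgroup of $G^\ast$, then $p\mid\chi_s(1)$ and $\chi_s\in\irrp{q}{B_q(S)}$ is the desired character; by symmetry, interchanging $p$ and $q$, one instead obtains a character in $\irrp{p}{B_p(S)}$ of degree divisible by $q$. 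We are thus reduced to the case $(\star)$ in which every nontrivial element of $\zent{Q^\ast}$ centralizes a Sylow $p$-subgroup of $G^\ast$, and every nontrivial element of $\zent{P^\ast}$ centralizes a Sylow $q$-subgroup of $G^\ast$.

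In case $(\star)$ I would turn to the unipotent characters in the principal $d$-Harish--Chandra series. Because $d_p(r)=d_q(r)=d$, the sets $\irr{B_p(S)}\cap\EC(S,1)$ and $\irr{B_q(S)}\cap\EC(S,1)$ both coincide with the unipotent characters whose $d$-cuspidal support is the trivial character of $\bg{L}^\ast$ (by Cabanes--Enguehard \cite{CE04} and its refinements for bad primes), so $B_p(S)$ and $B_q(S)$ contain the same unipotent characters, parametrized by $\Irr(W_d)$. Their degrees are the values at $r$ of the associated generic degrees --- products of cyclotomic polynomials divided by an integer --- and since $p\mid\Phi_d(r)$ and $q\mid\Phi_d(r)$, divisibility of such a degree by $p$ or by $q$ is governed by the $\Phi_d$-exponent occurring, the integer denominator, and the degree of the relevant $\psi\in\Irr(W_d)$. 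Running through the short list of exceptional $\bG$ with $d\in\{1,2\}$ and $pq\mid|W_d|$, I would exhibit in each case an explicit $\psi$ whose unipotent character lies in $B_p(S)$ with degree prime to $p$ but divisible by $q$ (or the mirror statement), reading off the known generic-degree tables; when no single unipotent character suffices one may also bring in semisimple characters $\chi_s$ with $s$ a $p$-element centralizing a full Sylow $p$-subgroup, or more general constituents of Lusztig series labelled by $p$-elements. One could also hope to show directly that $(\star)$ forces both $\Omega_1(\zent{P^\ast})$ and $\Omega_1(\zent{Q^\ast})$ into the abelian group $\bg{L}^{\ast F}$ and exploit this against the standing hypothesis that $[P,Q]\neq1$ for all choices of Sylow subgroups.

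The genuinely laborious part, and the main obstacle, will be the residual cases in which $p$ or $q$ is a bad prime for $\bG$ --- for exceptional groups this means $\ell\in\{2,3\}$ for $\type{G}_2$, $\type{F}_4$, $\type{E}_6^\epsilon$, $\type{E}_7$, $\tw{3}\type{D}_4$, and $\ell\in\{2,3,5\}$ for $\type{E}_8$ --- together with the cases $\ell\mid|\zent G|$ and the Suzuki and Ree groups. There neither Malle's abelian-Sylow criterion nor the clean Cabanes--Enguehard block parametrization applies directly, and one must verify by hand, using explicit character-degree data, that in each such $(S,\{p,q\})$ with $[P,Q]\neq1$ for all Sylow subgroups the required $p'$-degree character of $B_p(S)$ of degree divisible by $q$ (or its mirror) genuinely exists and lies in the principal block. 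The bounded number of exceptional types, and the restrictions $d\in\{1,2\}$ and $\{p,q\}\subseteq\{2,3,5,7\}$, keep this analysis finite, but it is where the real work lies.
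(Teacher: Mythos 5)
Your proposal is correct and follows essentially the same route as the paper: reduce via Proposition \ref{prop:Cdefabetc} to the case $r_0\notin\{p,q\}$ with nonabelian Sylow subgroups and $d_p(r)=d_q(r)=d$ (which forces $p,q\le 7$, excludes the Suzuki and Ree types, and makes $d$ a regular number), identify the unipotent characters lying in the principal $p$- and $q$-blocks (the paper cites \cite[Lemma 3.6]{RSV21} for this, covering the bad primes you worry about), and then exhibit one with the required degree divisibility by inspecting the generic degrees in \cite[Section 13.9]{carter}. The semisimple-character detour and the separately flagged bad-prime analysis in your sketch are not needed, and the explicit finite case check you defer is precisely the inspection of Carter's tables with which the paper concludes.
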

\begin{proof}
By Proposition \ref{prop:Cdefabetc}, we may assume that $r$ is not a power of $p$ nor $q$, that no Sylow $p$- or $q$- subgroup of $S$ is abelian, and that $d_p(r)=d_q(r)$. Let $d:= d_p(r)=d_q(r)$.  With these constraints, we see that $S$ is not of Suzuki or Ree type, that $p$ and $q$ are at most 7, and that $d$ is a regular number.  Hence we see that the principal blocks $B_p(S)$ and $B_q(S)$ are the the unique blocks of $S$ containing $p'$-, respectively, $q'$-degree unipotent characters (see, e.g., \cite[Lemma 3.6]{RSV21}). Under these conditions, we see by observing the explicit list of unipotent character degrees in \cite[Section 13.9]{carter} that there exists a unipotent character $\chi$ satisfying either $\chi\in\irrp{p}{B_p(S)}$ and $q\mid \chi(1)$ or $\chi\in\irrp{q}{B_q(S)}$ and $p\mid\chi(1)$.
\end{proof}

We next consider the case of linear and unitary groups. 
\begin{prop}\label{prop:CtypeA}
Let $S=\PSL_n^\epsilon(r)$ with $n\geq 2$, and   let $p\neq q$ be primes such that $[P,Q]\neq 1$ for every Sylow $p$-subgroup $P$ and Sylow $q$-subgroup $Q$ of $S$. Then either there exists $\chi\in\irrp{p}{B_p(S)}$ with degree divisible by $q$ or there exists $\chi\in\irrp{q}{B_q(S)}$ with degree divisible by $p$.
\end{prop}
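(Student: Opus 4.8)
The plan is to produce the required character among the \emph{unipotent} characters of $\GL_n^\epsilon(r)$, to reduce the computation of its $p$- and $q$-parts to the hook-length combinatorics of a single symmetric group, and then to quote Proposition~\ref{prop:Caltsym}. First I would apply Proposition~\ref{prop:Cdefabetc} to reduce to the situation in which $r$ is a power of neither $p$ nor $q$, neither a Sylow $p$- nor a Sylow $q$-subgroup of $S$ is abelian, and $d_p(r)=d_q(r)$, and use Proposition~\ref{prop:Csporadic} to also assume $S$ has no exceptional Schur multiplier. Write $\tilde G=\GL_n^\epsilon(r)$, so that $\SL_n^\epsilon(r)\lhd\tilde G$ and $S=\SL_n^\epsilon(r)/\zent{\SL_n^\epsilon(r)}$, and let $e=e_p(r)$ be the relevant ``$e$-invariant'': the order of $r$ modulo $p$ in the linear case, and (via Ennola duality) the order of $-r$ modulo $p$ in the unitary case. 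Since the passage $d_\ell(r)\mapsto e_\ell(r)$ depends only on $d_\ell(r)$ and $\epsilon$, the hypothesis $d_p(r)=d_q(r)$ yields $e_p(r)=e_q(r)=:e$, while non-abelianness of the Sylow subgroups translates into $m:=\lfloor n/e\rfloor\ge\max(p,q)$ (in particular $m\ge 3$, as $p\neq q$).

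Next I would recall from the work of Fong--Srinivasan (and Olsson) that the unipotent characters $\chi_\lambda$ of $\tilde G$ lying in the principal $p$-block are precisely those indexed by partitions $\lambda\vdash n$ whose $e$-core equals the $e$-core $(n_0)$ of $(n)$, where $n_0\equiv n\pmod e$; because $e_p(r)=e_q(r)$, the very same $\chi_\lambda$ then also lie in the principal $q$-block. For such a $\lambda$, combining the ($q$-analogue of the) Frobenius hook-length formula for $\chi_\lambda(1)$ with the lifting-the-exponent lemma gives, for $\ell\in\{p,q\}$,
\[
\chi_\lambda(1)_\ell\;=\;\frac{(m!)_\ell}{\prod_{h'}(h')_\ell},
\]
the product running over the hook lengths $h'$ of the $e$-quotient $\lambda^{(e)}$, an $e$-tuple of partitions of total size $m$. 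Choosing $\lambda$ to have $e$-core $(n_0)$ and $e$-quotient $(\mu,\emptyset,\dots,\emptyset)$ for some $\mu\vdash m$ turns this into $\chi_\lambda(1)_\ell=\chi_\mu^{\sym_m}(1)_\ell$. Hence it is enough to exhibit a partition $\mu\vdash m$ for which $\chi_\mu^{\sym_m}(1)$ is prime to one of $p,q$ and divisible by the other: for $m\ge 9$ this is exactly what Proposition~\ref{prop:Caltsym} provides when applied to $\sym_m$ (its conclusion, phrased for the principal block, is precisely of this shape), and for $3\le m\le 8$ it follows by inspection of $\sym_m$ (equivalently, from Proposition~\ref{prop:Csporadic} applied to $\sym_m$).

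It remains to descend $\chi_\lambda$ to $S$. A unipotent character of $\tilde G$ is trivial on $\zent{\tilde G}\ (\supseteq\zent{\SL_n^\epsilon(r)})$, and it restricts \emph{irreducibly} to $\SL_n^\epsilon(r)$: each of the $|\tilde G:\SL_n^\epsilon(r)|$ twists $\chi_\lambda\otimes\hat\delta$ by a linear character of $\tilde G/\SL_n^\epsilon(r)$ lies over the same irreducible constituent of $\chi_\lambda|_{\SL_n^\epsilon(r)}$ (being unipotent, $\chi_\lambda$ is fixed by such a $\hat\delta$ only when $\hat\delta=1$), which forces that constituent to be $\tilde G$-invariant. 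That restriction lies in $B_p(\SL_n^\epsilon(r))$, the unique block of $\SL_n^\epsilon(r)$ covered by $B_p(\tilde G)$ since $B_p(\SL_n^\epsilon(r))$ is $\tilde G$-stable. Standard domination of blocks under quotients then shows that $\chi_\lambda$ deflates to a character of $S$ in $\irrp{p}{B_p(S)}$ of the same degree, which is divisible by $q$ (or, symmetrically, one in $\irrp{q}{B_q(S)}$ divisible by $p$), as required.

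The main obstacle I anticipate is the middle step: keeping the block-theoretic bookkeeping uniform across the linear and unitary cases --- so that the $p$- and $q$-parts of unipotent degrees genuinely reduce to hook products over the \emph{same} symmetric group $\sym_m$ --- together with the prime $2$, where the order-modulo-$4$ convention for $d_2(r)$ and the $2$-adic form of the lifting-the-exponent lemma require separate handling. Once this is arranged, the combinatorial heart of the argument has already been carried out in Proposition~\ref{prop:Caltsym}.
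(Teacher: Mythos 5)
Your route is genuinely different from the paper's. The paper produces a \emph{semisimple} character: using the Weir and Carter--Fong descriptions of Sylow subgroups of $\GL_n^\epsilon(r)$, it builds an element $s\in\zent{\wt P}\cap G$ (a block-diagonal element with $a_1$ copies of a central $p$-element of $\GL^\epsilon_{ep^{t_1}}(r)$) whose centralizer cannot contain a Sylow $q$-subgroup, so that $\chi_s\in\irrp{p}{B_p}$ has degree $[\wt G:\cent{\wt G}{s}]_{r_0'}$ divisible by $q$; descent to $S$ is handled via criteria for $\chi_s$ to restrict irreducibly to $\SL_n^\epsilon(r)$ and Hiss's block criterion. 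You instead use \emph{unipotent} characters and the $e$-core/$e$-quotient combinatorics to reduce the $p$- and $q$-parts of degrees to hook products in $\sym_m$, then quote Proposition~\ref{prop:Caltsym}. For odd $p$ and $q$ your argument is sound and arguably cleaner: the identity $\chi_\lambda(1)_\ell=(m!)_\ell/\prod_{h'}(h')_\ell$ for $\ell\in\{p,q\}$ is correct (numerator and denominator each contribute $(r^{\pm e}-1)_\ell^{\,m}$, which cancel since the $e$-weight of $\lambda$ equals $m$), choosing the $e$-quotient $(\mu,\emptyset,\dots,\emptyset)$ is legitimate, the bound $m\ge\max(p,q)$ does follow from non-abelianness of the Sylow subgroups together with $e_p=e_q$, and the descent to $S$ via irreducible restriction of unipotent characters and block domination is standard. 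It also recycles the hardest combinatorial work of the paper (Proposition~\ref{prop:Caltsym}) rather than redoing a Sylow-structure analysis.

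The genuine gap is the case $2\in\{p,q\}$, which you acknowledge but do not resolve, and it is not a routine adjustment. For $\ell=2$ the lifting-the-exponent computation gives $(r^i-1)_2=(r-1)_2$ for odd $i$ but $(r^i-1)_2=\tfrac{(r-1)_2(r+1)_2}{2}(i)_2$ for even $i$, so the powers of $(r^{\pm e}-1)_2$ in numerator and denominator of the hook-length formula no longer cancel: the number of even entries of $\{1,\dots,n\}$ need not equal the number of even hooks of $\lambda$ (already for $\lambda=(2,1)$ in $\GL_3(r)$ one gets $\chi_\lambda(1)=r(r+1)$, whose $2$-part is $(r+1)_2$, not $(3!)_2/(3\cdot1\cdot1)_2=2$). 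Consequently $\chi_\lambda(1)_2$ acquires an extra factor which is an unbounded power of $2$ depending on $r$, and the reduction to $\sym_m$ breaks exactly where you need tight control of the $2$-part (e.g.\ to produce an \emph{odd}-degree character in $B_2$ with degree divisible by $p$). You would need either Macdonald's classification of odd-degree characters of $\GL_n^\epsilon(r)$, or a separate argument as in the paper's treatment via Carter--Fong, to close this case. A smaller point: your justification that unipotent characters restrict irreducibly to $\SL_n^\epsilon(r)$ is stated backwards; the correct reason is that $\chi_\lambda\otimes\hat\delta\neq\chi_\lambda$ for every nontrivial linear character $\hat\delta$ of $\wt G/\SL_n^\epsilon(r)$ (the twist lies in a different Lusztig series), so by Clifford theory with cyclic quotient the restriction is irreducible.
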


\begin{proof}
By Proposition \ref{prop:Csporadic}, we may assume $S$ is not isomorphic to a sporadic or alternating group and that $S$ has a nonexceptional Schur multiplier.  Further, by Proposition \ref{prop:Cdefabetc}, we may assume that $r$ is not a power of $p$ nor $q$, that no Sylow $p$- or $q$- subgroup of $S$ is abelian, and that $d_p(r)=d=d_q(r)$. 


Write $G:=\SL_n^\epsilon(r)$, $\wt{G}:=\GL_n^\epsilon(r)$, and $e:=d_p(\epsilon r)=d_q(\epsilon r)$. Note that $\wt{G}^\ast\cong \wt{G}$, so we will make this identification. Let $\wt{P}\in\Syl_p(\wt{G})$ and $\wt{Q}\in\Syl_q(\wt{G})$.  It suffices to show that (up to switching $p$ and $q$) there is some $s\in\zent{\wt{P}}\cap G$ such that $\cent{\wt{G}}{s}$ is not divisible by $|\wt{Q}|$ and $sz$ is not $\wt{G}$-conjugate to $s$ for any $1\neq z\in\zent{\wt{G}}$. 
(Indeed, that $s\in G=\OO^{r_0'}(\wt{G})$ implies that the corresponding semisimple character $\chi_s$ is trivial on $\zent{\wt{G}}$ and that $sz$ is not conjugate to $s$ for nontrivial $z\in\zent{\wt{G}}$ implies that $\chi_s$ is irreducible on restriction to $G$ using e.g. \cite[Proposition 2.7]{SFT22} and \cite[Lemma 1.4]{RSV21};  the remaining conditions imply $\chi_s\in\irr{B_p(\wt{G})}$ using \cite[Corollary 3.4]{Hiss90} and $\chi_s(1)$ is $p'$ but divisible by $q$ since $\chi_s(1)=[\wt{G}:\cent{\wt{G}}{s}]_{r_0'}$.)

Now, if $2\not\in\{p,q\}$, we see using 
the results of Weir \cite{weir} that  $\wt{P}$ and $\wt{Q}$ are naturally isomorphic to the corresponding Sylow subgroups of $\GL_{we}^\epsilon(r)$, embedded naturally into $\GL_{we}^\epsilon(r)\times \GL_{b}^\epsilon(r)\leq \GL_n^\epsilon(r)$ where $n=we+b$ with $0\leq b<e$. The results of Carter--Fong \cite{carterfong} yield the same when $p$ or $q$ is $2$, except that if $(e,b)=(2,1)$, then $|\GL_{b}^\epsilon(r)|$ is divisible by $2$ exactly once, and the Sylow $2$-subgroup of $\GL_n(r)$ in this case is that of  $\GL_{we}^\epsilon(r)\times \GL_{b}(r)$.  

Let $w=a_1p^{t_1}+a_2p^{t_2}+\cdots+a_kp^{t_k}=b_1q^{m_1}+\cdots+b_{k'}q^{m_{k'}}$ be the $p$-adic and $q$-adic expansions of $w$, with $t_1\leq\cdots\leq t_k$; $m_1\leq\cdots \leq m_{k'}$; $1\leq a_i<p$ for each $1\leq i\leq k$; and $1\leq b_j<q$ for each $1\leq j\leq k'$.  By \cite{weir, carterfong}, we have $\wt{P}\cong P_1^{a_1}\times \cdots\times P_k^{a_k} \times X$, where $P_i$ is a Sylow $p$-subgroup of $\GL_{ep^{t_i}}^\epsilon(r)$ (which, if $p$ is odd, is a Sylow $p$-subgroup of $\GL^\epsilon_{p^{t_i}}(r^e)$ embedded naturally) for each $1\leq i\leq k$ and $X\in\Syl_p(\GL_b^\epsilon(r))$ is isomorphic to $C_2$ if $(p,e,b)= (2,2,1)$ and is trivial otherwise.  Here we view $\prod \GL_{ep^{t_i}}^\epsilon(r)$ as the natural diagonally-embedded subgroup.  Similarly, $\wt{Q}\cong Q_1^{b_1}\times\cdots\times Q_{k'}^{b_k}\times Y$ where $Q_j\in \Syl_q(\GL_{eq^{m_j}}^\epsilon(r))$ for $1\leq j\leq k'$ and $Y\in\Syl_q(\GL_b^\epsilon(r))$.

Without loss, assume $a_1p^{t_1}<b_1q^{m_1}$. (Note that we cannot have $a_1p^{t_1}=b_1q^{m_1}$, as this would contradict that either $a_1<p<q$ or $b_1<q<p$.) 
Let $x\in \zent{P_1}$ have no eigenvalues equal to $1$ (indeed, this can be done by taking $x$ as an element of a Sylow $p$-subgroup of $\zent{\GL_{p^{t_1}}^\epsilon(r^e)}$
 embedded naturally into $\GL_{ep^{t_1}}^\epsilon(r)$) and consider the element
  $s=\mathrm{diag}(x, \ldots, x, I_{n-ea_1p^{t_1}})\in\zent{\wt{P}}$, with $a_1$ copies of $x$. In fact, taking $x$ (and hence its eigenvalues) to have order $p$, we obtain $\det(x)=1$ and hence $s\in G$. 
  Here $\cent{\wt{G}}{s}=\cent{\GL_{a_1ep^{t_1}}^\epsilon(r)}{\mathrm{diag}(x,\ldots, x)}\times \GL_{n-ea_1p^{t_1}}^\epsilon(r)$.  
  Now, by considering the structure, and hence size, of $\wt{Q}$ (namely, each $Q_j$ is a wreath product $Q'\wr C_q$ where $Q'$ is a Sylow $q$-subgroup of $\GL_{eq^{m_j-1}}^\epsilon(r)$), we see $\GL_{a_1ep^{t_1}}^\epsilon(r)\times \GL_{n-ea_1p^{t_1}}^\epsilon(r)$ is not divisible by $|\wt{Q}|$.
  Further, by considering the block sizes, we see that $sz$ and $z$ cannot have the same eigenvalues (and hence they cannot be $\wt{G}$-conjugate) for any nontrivial scalar matrix $z\in\zent{\GL_n^\epsilon(r)}$.  This completes the proof. 
 \end{proof}
 
 We next consider the remaining classical types, for which the proof is very similar to the linear and unitary case.

\begin{prop}\label{prop:Cclassical}
Let $S=\PSp_{2n}(r)$ with $n\geq 2$, $\POmega_{2n+1}(r)$ with $n\geq 3$, or $\POmega_{2n}^\epsilon(r)$ with $n\geq 4$.  Let $p\neq q$ be primes such that $[P,Q]\neq 1$ for every Sylow $p$-subgroup $P$ and Sylow $q$-Subgroup $Q$ of $S$. Then either there exists $\chi\in\irrp{p}{B_p(S)}$ with degree divisible by $q$ or there exists $\chi\in\irrp{q}{B_q(S)}$ with degree divisible by $p$.
\end{prop}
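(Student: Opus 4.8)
The plan is to follow the proof of Proposition~\ref{prop:CtypeA} essentially verbatim, replacing the linear and unitary building blocks of $\GL_n^\epsilon$ by the appropriate classical building blocks. By Propositions~\ref{prop:Csporadic} and~\ref{prop:Cdefabetc} we may assume that $S$ has non-exceptional Schur multiplier, that $r$ is coprime to $pq$, that no Sylow $p$- or $q$-subgroup of $S$ is abelian, and that $d:=d_p(r)=d_q(r)$. Write $G$ for the simply connected group with $G/\zent{G}\cong S$, that is, $\Sp_{2n}(r)$, $\mathrm{Spin}_{2n+1}(r)$, or $\mathrm{Spin}_{2n}^\epsilon(r)$; fix a regular embedding $G\le\wt G$ and pass to the dual group $\wt G^\ast$ with $\wt P^\ast\in\Syl_p(\wt G^\ast)$ and $\wt Q^\ast\in\Syl_q(\wt G^\ast)$. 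As in the first paragraph of the proof of Proposition~\ref{prop:CtypeA}, it suffices to exhibit, after possibly interchanging $p$ and $q$, a semisimple element $s$ of order $p$ lying in $\zent{\wt P^\ast}\cap[\wt G^\ast,\wt G^\ast]$ such that $|\wt Q^\ast|\nmid|\cent{\wt G^\ast}{s}|$ and $sz$ is not $\wt G^\ast$-conjugate to $s$ for every $1\ne z\in\zent{\wt G^\ast}$. Indeed, $s\in[\wt G^\ast,\wt G^\ast]$ forces the semisimple character $\chi_s$ of $\wt G$ to be trivial on $\zent{G}$; the non-conjugacy of $sz$ to $s$ gives irreducibility on restriction to $G$ (via \cite[Proposition~2.7]{SFT22} and \cite[Lemma~1.4]{RSV21}); $s\in\zent{\wt P^\ast}$ gives $\chi_s\in\irr{B_p(\wt G)}$ by \cite[Corollary~3.4]{Hiss90}; and $\chi_s(1)=[\wt G^\ast:\cent{\wt G^\ast}{s}]_{r_0'}$ is then $p'$ but divisible by $q$, while one arranges as in \cite[Theorem~3.5]{GSV19} that the deflation lands in $\irrp{p}{B_p(S)}$.

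The input that genuinely changes is the description of $\wt P^\ast$ and $\wt Q^\ast$. Let $e$ be the integer attached to $d$ and to the type of $\wt G^\ast$ by the standard recipe (for the symplectic and orthogonal types, $e=d$ when $d$ is even and $e=2d$ when $d$ is odd, subject to the documented small exceptions in types $D_n^+$ and $D_n^-$), and write the dimension of the natural module as $we+b$ with $b$ bounded and coprime to $pq$. Then by the theorems of Weir~\cite{weir} (applied to whichever of $p,q$ is odd) and of Carter--Fong~\cite{carterfong} (when one of $p,q$ equals $2$), the subgroups $\wt P^\ast$ and $\wt Q^\ast$ are, up to a bounded $2$-group coming from the rank-$b$ summand, naturally isomorphic to the Sylow subgroups of $\GL_w^{\pm}(r^{e_0})$ for the appropriate sign and field power $e_0$, embedded block-diagonally in the obvious twisted maximal torus of $\wt G^\ast$. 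Writing $w=a_1p^{t_1}+\cdots+a_kp^{t_k}=b_1q^{m_1}+\cdots+b_{k'}q^{m_{k'}}$ for the $p$- and $q$-adic expansions exactly as in Proposition~\ref{prop:CtypeA}, this yields $\wt P^\ast\cong P_1^{a_1}\times\cdots\times P_k^{a_k}\times(\text{$2$-group})$ with $P_i$ the Sylow $p$-subgroup of the rank-$p^{t_i}$ linear or unitary block, and likewise for $\wt Q^\ast$.

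From here the construction mirrors Proposition~\ref{prop:CtypeA}: after interchanging $p$ and $q$ we may assume $a_1p^{t_1}<b_1q^{m_1}$ (the case of equality being handled identically, or excluded as there), take $x\in\zent{P_1}$ of order $p$ with no eigenvalue $1$ inside a cyclic maximal torus of the block, and let $s$ be the block-diagonal element with $a_1$ copies of $x$ and the identity on the orthogonal complement, chosen so that $s$ preserves the form and has trivial determinant and spinor norm, hence lies in $[\wt G^\ast,\wt G^\ast]$. Then $\cent{\wt G^\ast}{s}$ is the product of a $\GL^{\pm}$-block of rank $a_1ep^{t_1}$ on the $x$-eigenspaces and a classical group of strictly smaller rank on the $1$-eigenspace; since already the single factor $Q_1$ of $\wt Q^\ast$ occupies a block of rank $b_1eq^{m_1}>a_1ep^{t_1}$, comparison of $q$-parts gives $|\wt Q^\ast|\nmid|\cent{\wt G^\ast}{s}|$, and the block-size (eigenvalue) bookkeeping shows $sz\not\sim s$ for nontrivial central $z$. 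I expect the main obstacle to lie entirely in the second paragraph: pinning down the parameter $e$, the sign $\pm$, and the field power $e_0$ uniformly across types $B_n$, $C_n$, $D_n^+$, $D_n^-$ and both parities of $d$, correctly absorbing the small exceptional configurations of \cite{weir,carterfong} (and disposing of the handful of genuinely small groups via Proposition~\ref{prop:Csporadic} or a direct computation), together with verifying cleanly that the constructed $s$ produces a character deflating to an \emph{irreducible} character of $S$ itself, which is slightly more delicate than in type~$A$ because the center of a $\mathrm{Spin}$ group in type $D_n$ with $n$ even is not cyclic.
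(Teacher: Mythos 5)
Your overall strategy is the paper's: reduce to producing a semisimple element $s$ in the centre of a Sylow $p$-subgroup of the dual group whose centralizer order is not divisible by the order of a Sylow $q$-subgroup, using the Weir and Carter--Fong descriptions of Sylow subgroups of classical groups and the same block-diagonal element built from $a_1$ copies of an eigenvalue-free $x\in\zent{P_1}$. The substantive difference is your choice of ambient group, and it is exactly this choice that creates the obstacle you flag at the end. You work with the simply connected group ($\Sp_{2n}$ or $\mathrm{Spin}$), a regular embedding, and $[\wt G^\ast,\wt G^\ast]$, which forces you to control spinor norms and to worry about the non-cyclic centre of $\mathrm{Spin}_{2n}(r)$ for $n$ even. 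The paper instead sets $H:=\Sp_{2n}(r)$, $\SO_{2n+1}(r)$, or $\SO_{2n}^\epsilon(r)$ and $\Omega:=\OO^{r_0'}(H)$, so that $S=\Omega/\zent{\Omega}$ with $\zent{H}$, $\zent{\Omega}$, and $H/\Omega$ all $2$-groups, and takes $H^\ast$ to be the corresponding $\SO$ or $\Sp$ group. This dissolves your ``delicate'' point: for $p$ odd, $s$ has odd order, so it automatically lies in $\OO^{r_0'}(H^\ast)$ and $sz$ cannot be $H^\ast$-conjugate to $s$ for $1\neq z$ in the $2$-group $\zent{H^\ast}$, giving triviality on $\zent{H}$ and irreducible restriction to $\Omega$ for free; for $p=2$, the odd degree of $\chi_s$ forces irreducible restriction to $\Omega$ (as $H/\Omega$ is a $2$-group) and triviality on the $2$-group $\zent{\Omega}$, and determinant $1$ is arranged by giving $x$ exactly $2^{t_1+1}$ eigenvalues $-1$. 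No spinor-norm analysis is needed.

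The second place where you defer to a ``standard recipe'' is also where the paper does concrete work, and you should be aware the details are not uniform in the way your sketch suggests. For $p,q$ odd the paper sets $e:=d_p(r^2)=d_q(r^2)$ and observes that Sylow subgroups of $\SO_{2n+1}(r)$ and $\Sp_{2n}(r)$ are Sylow subgroups of $\GL_{2n+1}(r)$ when $d$ is even and of a naturally embedded $\GL_n(r)$ when $d$ is odd, with each factor $P_i$ a Sylow $p$-subgroup of $\GL_{dp^{t_i}}(r)$ identified with one of $H^\ast_{ep^{t_i}}$; type $D$ is handled by sandwiching $\SO_{2n-1}(r)\leq\SO_{2n}^\epsilon(r)\leq\SO_{2n+1}(r)$ rather than by choosing signs and field powers block by block, and the centralizer in that case is only bounded above by a product of $\GO^{\pm}$ factors, which still suffices because $q$ is odd. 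Your comparison of $q$-parts via the single factor $Q_1$ is the right mechanism and matches the paper. So: same method, but your proposal as written leaves open precisely the verifications (Sylow identifications across types and parities of $d$, and irreducibility of the deflation) that constitute the actual proof, and the cleanest way to close them is to abandon the simply connected setup in favour of the $\SO$/$\Omega$ formulation.
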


 \begin{proof}
 As before, we may assume $S$ is not isomorphic to a sporadic or alternating group, that $S$ has a nonexceptional Schur multiplier, $r$ is not a power of $p$ nor $q$,  no Sylow $p$- or $q$- subgroup of $S$ is abelian, and that $d_p(r)=d=d_q(r)$.

First we set some notation.  We define $H_n:=\Sp_{2n}(r)$, $\SO_{2n+1}(r)$, and $\SO_{2n}^\epsilon(r)$ in the cases $S=\PSp_{2n}(r)$, $\POmega_{2n+1}(r)$, and $\POmega_{2n}^\epsilon(r)$, respectively. Let $H:=H_n$ and let $\Omega:=\OO^{r_0'}(H)$ so that $\Omega$ is perfect and $S=\Omega/\zent{\Omega}$. Note that the dual groups are $H_n^\ast =  \SO_{2n+1}(r)$, $\Sp_{2n}(r)$, and $\SO_{2n}^\epsilon(r)$, respectively, and we will write $H^\ast:=H_n^\ast$. Note that $\zent{\Omega}\leq \zent{H}$ and that $H/\Omega$ and $\zent{H}$ are $2$-groups.  Let $\wt{P}$ and $\wt{Q}$ be Sylow $p$- and $q$-subgroups of $H^\ast$.

In this situation, it suffices to show that (up to switching $p$ and $q$) there is some $s\in\zent{\wt{P}}$ such that $\cent{\wt{G}}{s}$ is not divisible by $|\wt{Q}|$, using similar reasoning to the above case.  Indeed, if $p$ is odd, then $\wt{P}$ may be considered as a Sylow $p$-subgroup of $\OO^{r_0'}(H^\ast)$ and $sz$ cannot be $H^\ast$-conjugate to $s$ for any $1\neq z\in\zent{H^\ast}$ since $\zent{H^\ast}$ is a 2-group, so a corresponding semisimple character $\chi_s$ of $H$ is trivial on $\zent{H}$ and restricts irreducibly to $\Omega$.  If instead $p=2$, then such a character $\chi_s$  would have odd degree, and therefore restrict irreducibly to $\Omega$ since $H/\Omega$ is a $2$-group.  Then since $\Omega$ is perfect,  $\zent{\Omega}$ is a $2$-group, and $\chi_s$ has odd-degree, this forces $\chi_s$ to be trivial on $\zent{\Omega}$.  Further, as before, $\chi_s\in\irr{B_p(H)}$ in either case.

Assume first that $p$ and $q$ are odd.  In these cases, the work of Weir \cite{weir} again describes the structure of $\wt{P}$ and $\wt{Q}$, building off of the case of linear groups.  If $H^\ast=\SO_{2n+1}(r)$ or $\Sp_{2n}(r)$, we have Sylow $p$- and $q$-subgroups are already Sylow subgroups of $\GL_{2n+1}(r)$ (and hence of $\GL_{2n}(r)$) when $d$ is even, and are Sylow subgroups of the naturally-embedded $\GL_n(r)$ if $d$ is odd.  For these cases, let $e:=d_p(r^2)=d_q(r^2)$,  write  $n=ew+b$ with $0\leq b<e$, and let $w=a_1p^{t_1}+a_2p^{t_2}+\cdots+a_kp^{t_k}=b_1q^{m_1}+\cdots+b_{k'}q^{m_{k'}}$ be the $p$-adic and $q$-adic expansions of $w$ as before. Again without loss, we assume $a_1p^{t_1}<b_1q^{m_1}$.
 In particular, $\wt{P}$ and $\wt{Q}$ are again isomorphic to Sylow subgroups of $H_{ew}^\ast$ and of the form $\wt{P}\cong P_1^{a_1}\times\cdots\times P_k^{a_k}$, where each $P_i$ is a Sylow $p$-subgroup of $\GL_{dp^{t_i}}(r)$ and can be identified with a Sylow $p$-subgroup of  $H_{ep^{t_i}}^\ast$, and similar for $\wt{Q}$. 
 As before, let $x\in\zent{P_1}$ with no eigenvalues equal to 1 and let $s=(x,\cdots x, 1, \ldots 1)\in\zent{\wt{P}}$ with $a_1$ copies of $x$. Then we can see from the centralizer structure of semisimple elements that $\cent{H^\ast}{s}\cong\cent{H^\ast_{a_1ep^{t_1}}}{(x,\ldots, x)}\times H^\ast_{n-ea_1p^{t_1}}$. Since the Sylow $q$-subgroups of $H_{a_1ep^{t_1}}^\ast$ and $H_{n-ea_1p^{t_1}}^\ast$  can be identified with Sylow subgroups of linear groups in an analogous way as for $H^\ast$, depending on whether $d$ was even or odd, we have $|\wt{Q}|\nmid |\cent{H^\ast}{s}|$ for the same reason as the case of linear groups above.

If $H^\ast=\SO_{2n}^\epsilon(r)$, then we have embeddings $\SO_{2n-1}(r)\leq H^\ast\leq \SO_{2n+1}(r)$, and $\wt{P}$ and $\wt{Q}$ are both Sylow subgroups of either $\SO_{2n-1}(r)$ or $\SO_{2n+1}(r)$.  In this case, letting $m\in\{n, n-1\}$ so that $\wt{P}, \wt{Q}$ are  Sylow subgroups of $\SO_{2m+1}(r)$ and now writing $m=ew+b$ with $w$ written with $p$- and $q$-adic expansions as before, $\wt{P}$ can again be written $\wt{P}\cong P_1^{a_1}\times\cdots\times P_k^{a_k}$ with each $P_i$ a Sylow subgroup of $\GL_{dp^{t_i}}(r)$, which in this case can also be identified with a Sylow $p$-subgroup of either $\SO_{2ep^{t_i}}^+(r)$ or $\SO_{2ep^{t_i}}^-(r)$.  From here, arguing similar to before, we obtain an element $s\in\zent{\wt{P}}$ with $\cent{H^\ast}{s}$ isomorphic to a subgroup of $\cent{\GO_{2a_1ep^{t_1}}^\pm(r)}{\mathrm{diag}(x,\ldots, x)}\times \GO^\pm_{2(n-ea_1p^{t_1})}(r)$. Since $q$ is odd, we again see in the same way as above that $|\wt{Q}|$ does not divide $|\cent{H^\ast}{s}|$. 


We are finally left with the case that $2\in\{p,q\}$. Let $\hat H^\ast$ denote the group $\GO_{2n+1}(r), \Sp_{2n}(r)$, or $\GO_{2n}^\epsilon(r)$ respectively, so that $[\hat H^\ast: H^\ast]$ divides $2$. Note that if $p=2$ and $H^\ast\neq \hat H^\ast$, then $\wt{P}$ is index-2 in a Sylow $2$-subgroup $\hat P$ of $\hat H^\ast$, which are again described by Carter--Fong \cite{carterfong}.  Here in the case of $\GO_{2n+1}(r)$ or $\Sp_{2n}(r)$, writing $n=2^{t_1}+\cdots + 2^{t_k}$ for the $2$-adic expansion with $t_1\leq\ldots\leq t_k$, we have $\hat P\cong P_1\times\cdots\times P_k$, where $P_i$ is a Sylow $2$-subgroup of $\GO_{2\cdot 2^{t_i}+1}(r)$, respectively, $\Sp_{2\cdot 2^{t_i}}(r)$. 
In the case $\hat H^\ast=\GO_{2n}^\epsilon(r)$, we have $\hat{P}$ is either a Sylow $2$-subgroup of $\GO_{2n+1}(r)$, embedded as before, or of the form $P_0\times C_2\times C_2$, where $P_0$ is a Sylow $2$-subgroup of $\GO_{2n-1}(r)$.  
From here, we may argue analogously to before, keeping in mind that when $p=2$, choosing $x\in\zent{P_1}$ to have $2^{t_1+1}$ eigenvalues $-1$ yields an element of determinant 1, and hence an element of $\wt{P}=\hat P\cap H^\ast.$
\end{proof}

Conjecture C for simple groups (and Theorem E) now follows from Propositions \ref{prop:Cif}-\ref{prop:Cclassical}.

\end{document}